\numberwithin{equation}{section}
\theoremstyle{plain}
\newtheorem{theorem}{Theorem}[section]
\newtheorem{lemma}[theorem]{Lemma}
\newtheorem{proposition}[theorem]{Proposition}
\newtheorem{corollary}[theorem]{Corollary}
\theoremstyle{definition}
\newtheorem{definition}{Definition}[section]
\theoremstyle{remark}
\newtheorem{remark}{Remark}[section]
\newcommand {\calN}        {{\mathcal N}}
\newcommand {\calF}        {{\mathcal F}}
\newcommand {\calS}        {{\mathcal S}}
\newcommand{\bitem}{\begin{itemize}}
\newcommand{\eitem}{\end{itemize}}
\newcommand{\mc}[1]{\mathcal{#1}}
\newcommand {\Rmn}        {{\mathbb R^{m\times n}}}
\newcommand{\N}{\mathbb{N}}
\newcommand{\R}{\mathbb{R}}
\newcommand{\EE}{\mathbb{E}}
\newcommand{\bpm}{\begin{pmatrix}}
\newcommand{\epm}{\end{pmatrix}}
\newcommand{\bsm}{\left(\begin{smallmatrix}}
\newcommand{\esm}{\end{smallmatrix}\right)}
\newcommand{\T}{\top}
\newcommand{\mrm}[1]{\mathrm{#1}}
\newcommand{\col}[2]{{#1}_{\bullet,#2}}
\newcommand{\veps}{\varepsilon}
\newcommand{\gdw}{\Leftrightarrow}
\newcommand{\eins}{\mathds{1}}
\DeclareMathOperator{\rank}{rank}
\DeclareMathOperator{\Diag}{Diag}
\DeclareMathOperator{\supp}{supp}
\DeclareMathOperator{\conv}{conv}
\title[Tomographic Compressive Sensing]{
Average Case Recovery Analysis \\ of Tomographic Compressive Sensing}
\author[Petra, Schn\"{o}rr]{Stefania Petra, Christoph Schn\"{o}rr}
\address[Stefania Petra]{Image and Pattern Analysis Group, University of Heidelberg, Speyerer Str.~6, 69115 Heidelberg, Germany} 
\email{petra@math.uni-heidelberg.de}
\address[Christoph Schn\"{o}rr]{Image and Pattern Analysis Group, University of Heidelberg, Speyerer Str.~6, 69115 Heidelberg, Germany} 
\email{schnoerr@math.uni-heidelberg.de}
\date{} 
\thanks{Support by the German Research Foundation (DFG) is gratefully acknowledged, as part of the project ``3D-Tomographie mit wenigen Projektionen in der Experimentellen 3D-Str\"{o}mungsmessung'', grant SCHN457/11.}
\keywords{compressed sensing, underdetermined systems of linear equations, sparsity, large deviation, tail bound, nonnegative least squares, algebraic reconstruction, TomoPIV}
\subjclass[2010]{65F22, 68U10}
\begin{document}

\sloppy

\begin{abstract}
The reconstruction of three-dimensional sparse volume functions from few tomographic projections constitutes a challenging problem in image reconstruction and turns out to be a particular instance
problem of compressive sensing. The tomographic measurement matrix encodes the incidence relation of the imaging process, and therefore is not subject to design up to small perturbations of non-zero entries. We present an average case analysis of the recovery properties and a corresponding tail bound to establish weak thresholds, in excellent agreement with numerical experiments. Our result improve the state-of-the-art of tomographic imaging in experimental fluid dynamics by a factor of three.
\end{abstract}

\maketitle

%
%

\section{Introduction}

Research on compressive sensing \cite{CompressedSensing-06,Candes-CompressiveSampling-06} focuses on properties of underdetermined linear systems 
\begin{equation} \label{eq:Ax=b}
A x = b,\qquad A \in \R^{m \times n},\qquad
m \ll n,
\end{equation}
that ensure the accurate recovery of sparse solutions $x$ from observed measurements $b$. Strong assertions are based on random ensembles of measurement matrices $A$ and measure concentration in high dimensions that enable to prove good recovery properties with high probability \cite{L1LPSparseApproximate-06,ErrorCorrectingLP-05}.

A common obstacle in various application fields are the limited options for \emph{designing} a measurement matrix so as to exhibit desirable mathematical properties, are very limited. Accordingly, recent research has also been concerned with more restricted scenarios, spurred by their relevancy to applications (cf.~Section \ref{sec:related-work}).

Consequently, we consider a representative scenario, motivated by applications in experimental fluid dynamics (Fig.~\ref{fig:TomoPIV}). A suitable mathematical abstraction of this setup gives rise to a huge and severely underdetermined linear system \eqref{eq:Ax=b} that has additional properties: a \emph{very sparse} nonnegative measurement matrix $A$ with \emph{constant small support} of all column vectors, and a nonnegative sparse solution vector $x$:
\begin{equation} \label{eq:Axb-properties}
A \geq 0,\; x \geq 0,\qquad \supp(\col{A}{j}) = \ell \ll m,\qquad \forall j = 1,\dotsc,n.
\end{equation}
Our objective is the usual one: relating accurate recovery of $x$ from given measurements $b$ to the sparsity $k = \supp(x)$ of the solution $x$ and to the dimensions $m, n$ of the measurement matrix $A$. The sparsity parameter $k$ has an immediate physical interpretation (Fig.~\ref{fig:TomoPIV}). Engineers require high values of $k$, but are well aware that too high values lead to spurious solutions. The current practice is based on a rule of thumb leading to conservative low values of $k$. 

In this paper, we are concerned with working out a better compromise along with a mathematical underpinning. The techniques employed are general and only specific to the class of linear systems \eqref{eq:Ax=b}, \eqref{eq:Axb-properties}, rather than to a particular application domain.

\begin{figure}
\centerline{
\includegraphics[width=0.3\textwidth]{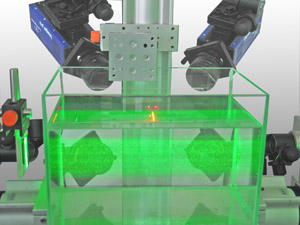} 
\hspace{0.025\textwidth}
\includegraphics[width=0.5\textwidth]{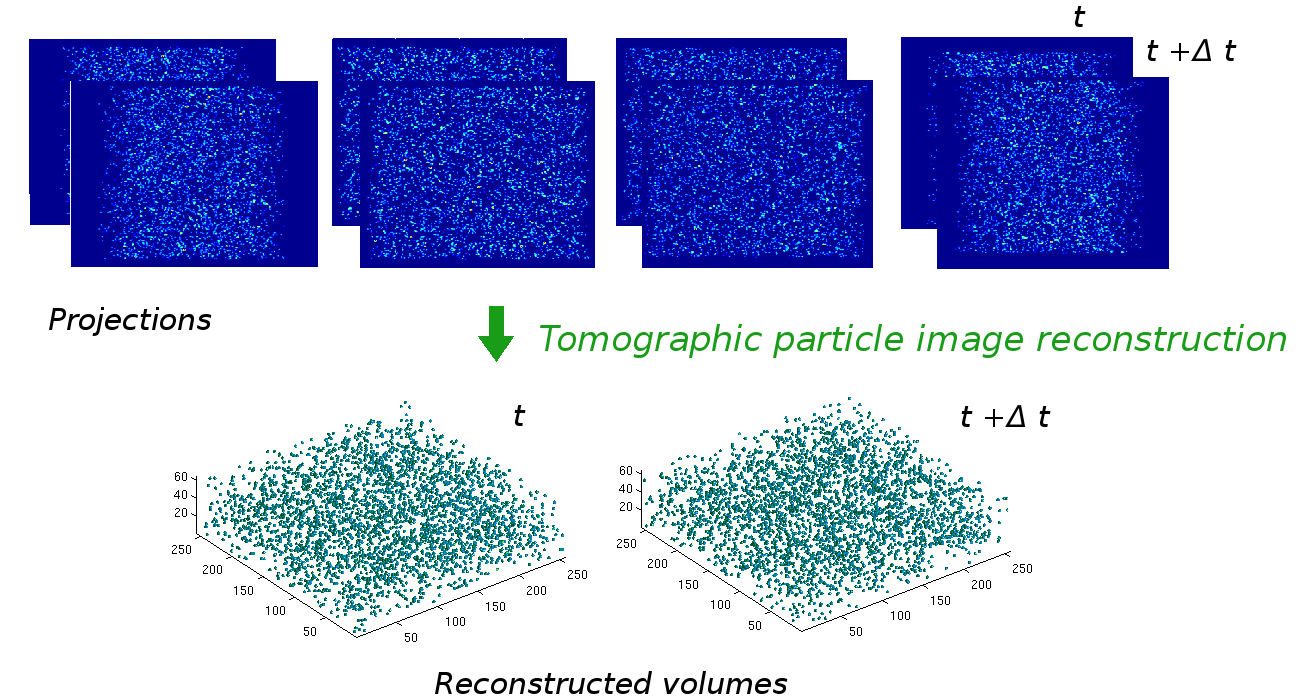}
}
\caption{
Compressive sensing in experimental fluid dynamics: A multi-camera setup gathers few projections from a sparse volume function. This scenario is described by a very large and highly underdetermined sparse linear system \eqref{eq:Ax=b} having the additional properties \eqref{eq:Axb-properties}. The sparsity parameter $k$ reflects the seeding density of a given fluid with particles. Less sparse scenarios increase the spatial resolution of subsequent studies of turbulent motions, but compromise accuracy of the reconstruction. Research is concerned with working out and mathematically substantiating the best compromise. 
}
\label{fig:TomoPIV}
\end{figure}

We regard the measurement matrix $A$ as \emph{given}. Concerning the design of $A$, we can only resort to small random perturbations of the non-zero entries of $A$, thus preserving the sparse structure that encodes the underlying incidence relation of the sensor. 
Additionally, we exploit the fact that solution vectors $x$ can be regarded as samples from a uniform distribution over $k$-sparse vectors, which represents with sufficient accuracy  the underlying physical situation.

Under these assumptions, we focus on an \emph{average case analysis} of conditions under which \emph{unique recovery} of $x$ can be expected with \emph{high probability}. A corresponding tail bound implies a weak threshold effect and criterion for adequately choosing the value of the sparsity parameter $k$. Our results are in excellent agreement with numerical experiments and improve the state-of-the-art by a factor of three.

\subsection*{Contribution and Organization} 
In Section \ref{sec:preliminaries}, we detail the mathematical abstraction of the imaging process and discuss directly related work. In Section \ref{sec:expanders}, we examine recent results of compressive sensing based on sparse expanders. This sets the stage for an average case analysis conducted in Section \ref{sec:weak-equivalence} and corresponding weak recovery properties, that are in sharp contrast to poor strong recovery properties presented in Section \ref{sec:strong}. We conclude with a discussion of quantitative results and their agreement with numerical experiments in Section \ref{sec:experiments}.

\subsection*{Notation}
$|X|$ denotes the cardinality of a finite set $X$ and $[n] = \{1,2,\dotsc,n\}$ for $n \in \N$. 
We will denote by
$\|x\|_{0} = |\{i \colon x_{i} \neq 0 \}|$ 
and $\R_{k}^{n} = \{ x \in \R^{n} \colon \|x\|_{0} \leq k \}$ the set of $k$-sparse vectors. The corresponding sets of non-negative vectors are denoted by $\R_{+}^{n}$ and $\R_{k,+}^{n}$, respectively. The support
of a vector $x\in \R^{n}$, $\mrm{supp}(x) \subseteq [n]$,
is the set of indices of non-vanishing components of $x$. 
With $I^{+}(x) = \{i \colon x_{i} > 0\}$, $I^{0}(x) = \{i \colon x_{i} = 0\}$ and $I^{-}(x) = \{i \colon x_{i} < 0\}$, we have  $\mrm{supp}(x) = I^{+}(x) \cup I^{-}(x)$ and $\|x\|_{0} = |\mrm{supp}(x)|$.

For a finite set $S$, the set $\mc{N}(S)$ denotes the union of all
neighbors of elements of $S$ where the corresponding relation (graph)
will be clear from the context.

$\eins = (1,\dotsc,1)^{\T}$ denotes the one-vector of appropriate dimension.

$\col{A}{i}$ denotes the $i$-th column vector of a matrix $A$. For
given index sets $I, J$, matrix $A_{I J}$ denotes the submatrix of $A$ 
with rows and columns indexed by $I$ and $J$, respectively. $I^{c},
J^{c}$ denote the respective complement sets. Similarly, $b_{I}$
denotes a subvector of $b$.

$\EE[\cdot]$ denotes the expectation operation applied to a random variable and $\Pr(A)$ the probability to observe an event $A$.

\newpage

\section{Preliminaries}
\label{sec:preliminaries}
\subsection{Imaging Setup and Representation}
\label{sec:setup}

We refer to Figure \ref{fig_1} for an illustration of the mathematical abstraction of the scenario depicted by Figure \ref{fig:TomoPIV}.
In order to handle in parallel the 2D and 3D cases, we will use the variable
\begin{equation} \label{eq:def-D}
D \in \{2,3\}.
\end{equation}

We measure the \textbf{problem size} in terms of $d \in \N$ and consider $n:=d^D$ \textbf{cells} in a square ($D=2$) or cube ($D=3$)
and  $m:=Dd^{D-1}$ \textbf{rays}, compare Fig.~\ref{fig_1}, left and right. It will be useful to denote the set of cells by $C = [n]$ and the set of rays by $R = [m]$.
The incidence relation
between cells and rays is given by a 
$m\times n$ \textbf{measurement matrix} $A_d^D$ 
\begin{equation} \label{eq:def-AdD}
(A_d^D)_{ij}=
\begin{cases} 1,& 
\quad \text{if $j$-th ray intersects $i$-th cell},\\
 0, & \quad \text{otherwise}, \end{cases}
\end{equation}
for all $i\in[m]$, $j\in[n]$. Thus, cells and rays correspond to columns and rows of $A_d^D$.

The incidence relation encoded by $A_d^D$ gives rise to the equivalent representation in terms of a \textbf{bipartite graph} $G = (C,R;E)$ with left and right vertices $C$ and $R$, and edges $cr \in E$ iff $(A_{d}^{D})_{rc} = 1$. Figure \ref{fig_1} illustrates that $G$ has \textbf{constant left-degree} $\ell = D$. It will be convenient to use a separate symbol $\ell$. 

For a fixed vertex $i$, any adjacent vertex $j \sim i$ is called \textbf{neighbor} of $i$. For any non-negative measurement matrix $A$ and the corresponding graph, the set
\[
\calN(S) = \{i \in [m] \colon i \sim j,\, j \in S \} 
= \{i\in[m] \colon A_{ij}>0,\, j\in S\}
\]
contains all neighbors of $S$. The same notation applies to neighbors of subsets $S \subset [m]$ of right nodes.

With slight abuse, we call the matrix $A_{d}^{D}$ that encodes the adjacency $r \sim c$ of vertices $r \in R$ and $c \in C$ \textbf{adjacency matrix} of the induced bipartite graph $G$, deviating from the usual definition of the adjacency matrix of a graph that encodes the adjacency of \emph{all} nodes $v_{i} \sim v_{j},\, V = C \cup R$.
Moreover, in this sense, we will call any non-negative matrix adjacency matrix, based on its non-zero entries.

Let $A$ be the non-negative adjacency matrix of a bipartite graph with constant left degree $\ell$. The \textbf{perturbed matrix} $\tilde A$ is computed by uniformly perturbing the non-zero entries $A_{ij} > 0$ to obtain $\tilde A_{ij} \in [A_{ij}-\veps,A_{ij}+\veps]$, and by normalizing subsequently all column vectors of $\tilde A$. In practice, such perturbation can be implemented by discretizing the image by radial basis functions
and choose their locations on an irregular grid, see \cite{Petra2009}.

\vspace{0.25cm}
The following class of graphs plays a key role in the present context and in the field of compressed sensing in general.
\begin{definition}\label{def:Expander}
  A \textbf{$(\nu,\delta)$-unbalanced expander} is a bipartite simple graph $G
  = (L,R;E)$ with constant left-degree $\ell$ such that for any $X \subset L$ with
  $|X| \leq \nu$, the set of neighbors $\calN(X) \subset R$ of $X$ has at least size
  $|\calN(X)| \geq \delta \ell |X|$.
\end{definition}

\subsection{Deviation Bound}

We will apply the following inequalities for bounding the deviation of a random variable from its expected value based on martingales, that is on sequences of random variables $(X_{i})$ defined on a finite probability space $(\Omega, \mc{F}, \mu)$ satisfying
\begin{equation} \label{eq:condition-martingale}
 \EE[X_{i+1}|\mc{F}_{i}] = X_{i},\qquad
 \text{for all}\quad i \geq 1,
\end{equation}
where $\mc{F}_{i}$ denotes an increasing sequence of $\sigma$-fields in $\mc{F}$ with $X_{i}$ being $\mc{F}_{i}$-measurable.

This setting applies to random variables associated to measurements that are statistically \emph{dependent} due to the intersection of projection rays (cf.~Fig.~\ref{fig_1}).
\begin{theorem}[Azuma's Inequality \cite{Azuma1967,DasGupta2008}]
\label{thm:Azuma}
 Let $(X_{i})_{i=0,1,2,\dotsc}$ be a sequence of random variables such that for each $i$,
\begin{equation} \label{eq:def-ci}
 |X_{i}-X_{i-1}| \leq c_{i}.
\end{equation}
Then, for all $j \geq 0$ and any $\delta > 0$,
\begin{equation}
 \Pr\big(|X_{j}-X_{0}| \geq \delta\big) \leq 
 2 \exp\Big( -\frac{\delta^{2}}{2 \sum_{i=1}^{j} c_{i}^{2}}\Big).
\end{equation}
\end{theorem}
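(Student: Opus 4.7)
The plan is the standard exponential-moment (Chernoff) argument applied to the martingale differences $Y_i := X_i - X_{i-1}$. By assumption $|Y_i|\le c_i$, and the martingale condition \eqref{eq:condition-martingale} yields $\EE[Y_i\mid\mc F_{i-1}]=0$. For any $\lambda>0$, Markov's inequality applied to the nonnegative random variable $e^{\lambda(X_j-X_0)}$ gives
\[
\Pr(X_j-X_0\ge\delta)\;\le\;e^{-\lambda\delta}\,\EE\bigl[e^{\lambda(X_j-X_0)}\bigr]
= e^{-\lambda\delta}\,\EE\Bigl[\exp\bigl(\lambda\textstyle\sum_{i=1}^{j}Y_i\bigr)\Bigr].
\]

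Next I would peel off one factor at a time using the tower property. Conditioning on $\mc F_{j-1}$ (with respect to which $Y_1,\dots,Y_{j-1}$ are measurable) gives
\[
\EE\Bigl[\exp\bigl(\lambda\textstyle\sum_{i=1}^{j}Y_i\bigr)\Bigr]
=\EE\Bigl[\exp\bigl(\lambda\textstyle\sum_{i=1}^{j-1}Y_i\bigr)\cdot\EE\bigl[e^{\lambda Y_j}\mid\mc F_{j-1}\bigr]\Bigr].
\]
The central technical step is Hoeffding's lemma: for any random variable $Y$ with $\EE[Y\mid\mc G]=0$ and $|Y|\le c$ almost surely, one has $\EE[e^{\lambda Y}\mid\mc G]\le e^{\lambda^2 c^2/2}$. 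This is proved by writing $Y=\alpha c+(1-\alpha)(-c)$ as a convex combination with $\alpha=(Y+c)/(2c)\in[0,1]$, applying convexity of $t\mapsto e^{\lambda t}$ to bound $e^{\lambda Y}\le\alpha e^{\lambda c}+(1-\alpha)e^{-\lambda c}$, taking conditional expectations (the linear term in $Y$ vanishes), and checking that the resulting function $\varphi(\lambda c)$ is dominated by $e^{\lambda^2 c^2/2}$ via a Taylor expansion of $\log\varphi$. Applying this with $c=c_j$ and $\mc G=\mc F_{j-1}$ and iterating the tower argument $j$ times yields
\[
\EE\Bigl[\exp\bigl(\lambda\textstyle\sum_{i=1}^{j}Y_i\bigr)\Bigr]\;\le\;\exp\!\Bigl(\tfrac{\lambda^2}{2}\textstyle\sum_{i=1}^{j}c_i^2\Bigr).
\]

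Combining with the Markov bound gives $\Pr(X_j-X_0\ge\delta)\le\exp\bigl(-\lambda\delta+\tfrac{\lambda^2}{2}\sum_i c_i^2\bigr)$, and the minimizing choice $\lambda=\delta/\sum_{i=1}^{j}c_i^2$ produces the one-sided tail $\exp\bigl(-\delta^2/(2\sum_{i=1}^{j}c_i^2)\bigr)$. To obtain the two-sided statement of the theorem I would apply the same argument to the martingale $(-X_i)$, which also has bounded differences by $c_i$, and combine the two tails by a union bound to pick up the factor $2$.

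The main obstacle is really just Hoeffding's lemma; everything else is a routine Chernoff/tower-property computation. Care is needed only in the conditioning bookkeeping (ensuring $Y_1,\dots,Y_{i-1}$ are $\mc F_{i-1}$-measurable so they come outside the inner conditional expectation) and in checking that the symmetric argument for the lower tail does not require any extra assumption beyond \eqref{eq:def-ci} and \eqref{eq:condition-martingale}.
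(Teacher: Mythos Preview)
The paper does not prove Theorem~\ref{thm:Azuma}; it merely states Azuma's inequality and cites \cite{Azuma1967,DasGupta2008} for the proof. Your proposal is the standard Chernoff--Hoeffding argument and is correct, so there is nothing to compare against beyond noting that you have supplied what the paper deliberately omits.
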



\begin{figure}
\centerline{
\includegraphics[width=0.35\textwidth]{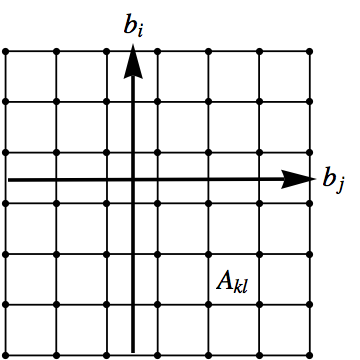}
\includegraphics[width=0.4\textwidth]{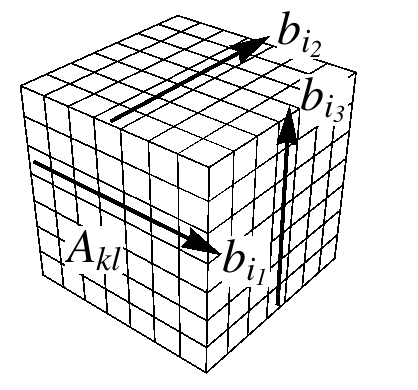}
}
\caption{
{\bf Left:} 
2D imaging geometry with $d^2$ cells and $2 d$ projection rays (here: $d=6$). The incidence relation is given by the measurement matrix $A = A_d^2$ (cf.~Eqn.~\eqref{eq:def-AdD}) which is the adjacency of a bipartite graph with constant left degree $\ell=2$.  
{\bf Right:} 
3D imaging geometry with $d^3$ cells and $3d^2$ rays (here: $d=7$). The incidence relation given by the measurement matrix 
$A = A_d^3$ is the adjacency of a bipartite graph with constant left degree $\ell=3$.
}
\label{fig_1}
\end{figure}

%

\subsection{Related Work}
\label{sec:related-work}
Although it was shown \cite{Candes-CompressiveSampling-06} that random measurement matrices are optimal for Compressive Sensing, in the sense that they require a minimal number of samples to recover efficiently a 
$k$-sparse vector, recent trends  \cite{RIP-P-SMM-08, XuHassibi_Expander} tend to replace
random dense matrices by adjacency matrices of ''high quality'' expander graphs.
Explicit constructions of such expanders exist, but are quite involved.
However, random $m\times n$ binary matrices with nonreplicative columns that have
$\lfloor \ell n\rfloor$ entries equal to $1$, perform numerically extremely well, even if $\ell$ is small, as shown in \cite{RIP-P-SMM-08}.
In \cite{HassibiIEEE} it is shown that perturbing the elements of
adjacency matrices of expander graphs with low expansion, can also improve performance.
This findings complement our prior work in \cite{Petra2009}, where
we observed that by slightly perturbing the entries of a tomographic projection
matrix its reconstruction performance can be improved significantly.

We wish to inspect the bounds on the required sparsity that
guarantee exact reconstruction of \emph{most} sparse signals, and 
corresponding critical parameter values similar to 
weak thresholds in \cite{DonTan05, DonohoT10}. The authors have computed
sharp reconstruction thresholds for Gaussian measurements, such that for given a signal 
length $n$ and numbers of measurements $m$, the maximal 
sparsity value $k$ which guarantees perfect reconstruction can be determined precisely.

For a matrix $A\in\R^{m\times n}$, Donoho and Tanner define the undersampling ratio
$\delta=\frac{m}{n}\in(0,1)$ and the sparsity as a fraction of $m$,
$k=\rho m$, for $\rho\in (0,1)$. The so called \emph{strong phase transition} 
$\rho_S(\delta)$ indicates the necessary undersampling ratio $\delta$ to recover  
\emph{all} $k$-sparse solutions, while the \emph{weak phase transition}
$\rho_W(\delta)$ indicates when $x^*$ with $\|x^*\|_0\le\rho_W(\delta) \cdot m$
can be recovered with overwhelming probability by linear programming. 

Relevant for TomoPIV is the setting as $\delta\to 0$ and
$n\to\infty$, that is severe undersampling, since the number of measurements is
of order $O(10^4)$ and discretization of the volume can be made accordingly fine.
For Gaussian ensembles a strong asymptotic threshold $\rho_S(\delta)\approx (2e
\log(1/\delta)^{-1}$ and weak asymptotic threshold
$\rho_W(\delta)\approx (2 \log(1/\delta)^{-1}$  holds, see e.g. \cite{DonTan05}.
In this highly undersampled regime, the asymptotic thresholds are the same
for nonnegative and unsigned signals.
Exact sparse recovery of nonnegative vectors has been also studied
in a series of recent papers \cite{HassibiIEEE, WangIEEE},
while \cite{Stojnic10a,Stojnic10b} additionally assumes that all nonzero elements 
are equal to each other.
As expected, additional information, improves the recoverable sparsity
thresholds. 


\subsubsection{Strong Recovery}

The maximal sparsity $k$ depending on $m$ and $n$, such that \emph{all} 
sparse signals are \emph{unique} and coincide with the
\emph{unique positive} solution of $Ax=b$, is investigated in
\cite{DonTan05, DonohoT10} from the perspective of
convex geometry by studying the face lattice of
the convex polytope  $\conv\{\col{A}{1},\dots,\col{A}{n},0\}$.
It is related to the nullspace property for nonnegative signals in
what follows.
\begin{theorem}[\cite{DonTan05, HassibiIEEE, WangIEEE, Petra2009}]
\label{thm:AllPOS} Let $A\in\Rmn$ be an arbitrary matrix.
Then the following statements are equivalent:
\begin{itemize}
\item[(a)] Every $k$-sparse nonnegative vector $x^*$ is the unique positive 
solution of $Ax=Ax^*$.
\item[(b)] The convex polytope defined as the convex hull of the columns
in $A$ and the zero vector, i.e. $\conv\{\col{A}{1},\dots,\col{A}{n},0\}$ is outwardly $k$-neighborly.
\item[(c)] Every nonzero null space vector has at least $k+1$
negative (and positive) entries.
\end{itemize}
\end{theorem}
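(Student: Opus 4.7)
The plan is to prove the two equivalences (a)$\Leftrightarrow$(c) and (a)$\Leftrightarrow$(b) separately. The first is an elementary null space manipulation; the second is the classical geometric reformulation in terms of the face structure of the polytope $P := \conv\{\col{A}{1},\dotsc,\col{A}{n},0\}$, which requires more bookkeeping around the role of the origin as a distinguished vertex.

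For (a)$\Rightarrow$(c), I take an arbitrary nonzero $v \in \ker(A)$ and split it as $v = v^{+} - v^{-}$ with $v^{\pm} \geq 0$ and disjoint supports $I^{+}(v)$, $I^{-}(v)$. Then $A v^{+} = A v^{-}$ exhibits two disjointly supported nonnegative solutions of the same system. If $|I^{+}(v)| \leq k$, then $v^{+}$ is $k$-sparse nonnegative and (a) forces $v^{+} = v^{-}$, which together with the disjointness of supports yields $v = 0$, a contradiction. Hence $|I^{+}(v)| \geq k+1$, and applying the same argument to $-v$ gives $|I^{-}(v)| \geq k+1$. For the converse (c)$\Rightarrow$(a), let $x^{*}$ be $k$-sparse nonnegative and $x \geq 0$ satisfy $A x = A x^{*}$. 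Then $v := x - x^{*} \in \ker(A)$, and any index $i$ with $v_{i} < 0$ satisfies $x^{*}_{i} = x_{i} - v_{i} > 0$ since $x_{i} \geq 0$, so $I^{-}(v) \subseteq \supp(x^{*})$ and $|I^{-}(v)| \leq k$; by (c) this forces $v = 0$.

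For (a)$\Leftrightarrow$(b), I invoke the correspondence between nonnegative representations $b = \sum_{j} x_{j} \col{A}{j}$, $x \geq 0$, and convex combinations of the vertices of $P$, the origin absorbing the slack weight $1 - \eins^{\T} x / s$ after rescaling by some $s \geq \eins^{\T} x$. For (b)$\Rightarrow$(a), the index set $S = \supp(x^{*})$ has $|S| \leq k$, so by outward $k$-neighborliness $\{\col{A}{j}\}_{j \in S}$ spans a face $F$ of $P$ that does not contain $0$; any second nonnegative representation of $A x^{*}$ normalizes to a point on $F$, whose convex expansion must be supported on the vertices of $F$, and affine independence of these vertices yields uniqueness. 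For (a)$\Rightarrow$(b) I argue by contrapositive: failure of outward $k$-neighborliness produces an index set $S$ with $|S| \leq k$ and a point $p \in \conv\{\col{A}{j}\}_{j \in S}$ admitting a second convex representation using vertices outside $S$ (possibly including $0$); a suitable rescaling turns these two representations into two distinct nonnegative preimages of the same right-hand side, one of which is supported on $S$ and hence $k$-sparse, contradicting (a).

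I expect the main obstacle to lie in the careful treatment of the origin as a vertex in (a)$\Leftrightarrow$(b): one must simultaneously track the normalization $\eins^{\T} x$ and the possibility that an alternative representation places nontrivial mass at $0$, which is precisely what the ``outward'' qualifier in outward $k$-neighborliness encodes. The null space equivalence (a)$\Leftrightarrow$(c), by contrast, is essentially bookkeeping on the positive and negative parts of a kernel vector and requires no polytope machinery.
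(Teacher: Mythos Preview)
The paper does not prove this theorem at all: it is stated with citations to \cite{DonTan05, HassibiIEEE, WangIEEE, Petra2009} and no proof is given, so there is nothing to compare your argument against line by line. Your proof of (a)$\Leftrightarrow$(c) is correct and is exactly the standard null-space argument used in those references; in particular the paper later invokes precisely this direction (via Theorem~\ref{thm:AllPOS}(c)) in the proof of Theorem~\ref{thm:wang}.

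Your treatment of (a)$\Leftrightarrow$(b) follows the Donoho--Tanner line and is fine as a sketch, but note two points you are glossing over. First, for an \emph{arbitrary} matrix $A$ the points $\col{A}{1},\dotsc,\col{A}{n},0$ need not be affinely independent or even distinct, so ``the vertices of $F$'' and the affine-independence step need a word of justification (the degenerate cases are harmless because they simultaneously kill (a), (b) and (c), but you should say so). Second, in (b)$\Rightarrow$(a) you use that the face $F$ spanned by $\{\col{A}{j}\}_{j\in S}$ does not contain $0$; this is exactly the content of the qualifier ``outwardly'', and you should make explicit that outward $k$-neighborliness guarantees both that $F$ is a face and that $0\notin F$, since only then does the normalized alternative representation land in $F$ and get forced onto the same vertex set. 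With those clarifications your argument is complete.
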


\subsubsection{Weak Recovery}

Thm. 2 in \cite{DonTan05} shows the equivalence between
$(k,\epsilon)$-weakly (outwardly) neighborliness and
weak recovery, i.e. uniqueness of all except a fraction $\epsilon$
of $k$-sparse nonnegative vectors. Weak neighborliness is the same thing as saying that $A\Delta_0^{n-1}$ has at least $(1-\epsilon)$-times as many $(k-1)$-faces as the simplex $\Delta_0^{n-1}$. A different form of weak recovery is to determine the probability that
a random $k$-sparse positive vector by probabilistic nullspace analysis.
This concepts are related for an arbitrary sparse vector with exactly 
$k$ nonnegative entries in the next theorem.

\begin{theorem}\label{thm:individual_uniqueness_POS} 
Let $A\in\Rmn$ be an arbitrary matrix.
Then the following statements are equivalent:
\begin{itemize}
\item[(a)] The $k$-sparse nonnegative vector $x^*$ supported on
$S$, $|S|=k$, is the unique positive solution of $Ax=Ax^*$.
\item[(b)] Every nonzero null space vector cannot have all its
negative components in $S$.
\item[(c)] $A_S\R^k_+$ is a $k$-face of $A\R^n_+$, i.e. there exists a hyperplane
separating the cone generated by the linearly independent columns $\{\col{A}{j}\}_{j_\in S}$ from the cone generated by the columns of the off-support $\{\col{A}{j}\}_{j_\in S^c}$.
\end{itemize}
\end{theorem}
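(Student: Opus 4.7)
The plan is to prove the equivalences in the cyclic order $(a)\Rightarrow(b)\Rightarrow(c)\Rightarrow(a)$, with the first step a direct perturbation argument, the second an application of a theorem of alternatives, and the third a brief separation calculation. For the contrapositive of $(a)\Rightarrow(b)$, suppose there exists a nonzero $v\in\ker A$ with $I^-(v)\subseteq S$. Because $x^*_i>0$ for every $i\in S$, I can choose $t>0$ small enough that $x^*+tv\geq 0$: the only nonnegativity constraints that could be violated sit at indices in $I^-(v)\subseteq S$, where $x^*$ is strictly positive. This yields a second nonnegative solution of $Ax=Ax^*$, contradicting (a). Conversely, any distinct nonnegative $\tilde x$ with $A\tilde x=Ax^*$ produces $v:=\tilde x-x^*\in\ker A\setminus\{0\}$ with $v_i=\tilde x_i\geq 0$ for $i\notin S$ (since $x^*_i=0$ there), so $I^-(v)\subseteq S$.

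For $(b)\Rightarrow(c)$, I apply Motzkin's theorem of alternatives to the mixed strict/equality system
\[
A_S^\top\eta = 0,\qquad A_{S^c}^\top\eta < 0.
\]
If this system were infeasible, Motzkin would produce $y\in\R^{|S^c|}_+$ with $y\neq 0$ and $z\in\R^k$ such that $A_{S^c}y=A_S z$. The vector $w\in\R^n$ defined by $w_S=-z$ and $w_{S^c}=y$ then satisfies $Aw=0$, is nonzero because $w_{S^c}=y\neq 0$, and has $I^-(w)\subseteq S$ because $w_{S^c}\geq 0$; this contradicts (b). Hence a separating $\eta$ exists, producing a hyperplane that contains every $\col{A}{j}$ for $j\in S$ and strictly separates them from every $\col{A}{j}$ for $j\in S^c$. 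Linear independence of $\{\col{A}{j}\}_{j\in S}$, which is needed for the face to be $k$-dimensional, also follows from (b): any nontrivial $z$ with $A_Sz=0$ would extend by zeros to a forbidden kernel vector.

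Closing the loop, $(c)\Rightarrow(a)$ is immediate: given a separating $\eta$ and any nonnegative $\tilde x$ with $A\tilde x=Ax^*$, I take the inner product with $\eta$ on both sides to obtain $(A^\top\eta)^\top\tilde x=(A^\top\eta)^\top x^*=0$; since $(A^\top\eta)_j<0$ for $j\in S^c$ while $\tilde x_j\geq 0$, every summand is nonpositive and must vanish, forcing $\tilde x_j=0$ on $S^c$, after which linear independence of $A_S$ gives $\tilde x_S=x^*_S$. The main obstacle is the $(b)\Rightarrow(c)$ step: plain Farkas is not strong enough here, because strict inequality on $S^c$ must be combined with equality on $S$, so Motzkin's transposition theorem (or the equivalent Gale/Tucker strict alternative) is needed and its ``$y\neq 0$'' clause must be carefully translated into the kernel-vector language of (b). All other steps reduce to elementary linear-algebraic manipulations.
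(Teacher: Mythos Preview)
Your argument is correct. The equivalence $(a)\Leftrightarrow(b)$ via the perturbation $x^*+tv$ and the difference $\tilde x-x^*$ is clean, the Motzkin alternative is applied correctly (the key point being that the certificate $y$ in the alternative is nonzero, which is exactly what forces $w_{S^c}\neq 0$ and hence $w\neq 0$), and the closing step uses the strict inequality on $S^c$ together with the full column rank of $A_S$ in the expected way.

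Your route differs from the paper's in two respects. First, the paper does not argue any of the implications directly: it obtains $(a)\Leftrightarrow(b)$ by invoking \cite[Thm.~1]{Man09ProbInteger} and $(a)\Leftrightarrow(c)$ by invoking \cite[Lem.~5.1]{DonohoT10}, so the logical structure is two separate biconditionals rather than your cycle $(a)\Rightarrow(b)\Rightarrow(c)\Rightarrow(a)$. Second, your proof is self-contained: the Motzkin step makes explicit the theorem-of-alternatives mechanism that underlies the Donoho--Tanner face characterization, and in particular it exposes why the separating functional must satisfy an \emph{equality} on $S$ and a \emph{strict} inequality on $S^c$ (and why ordinary Farkas is insufficient). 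What the paper's approach buys is brevity and a direct link to the convex-geometry literature on neighborly polytopes and face counting; what your approach buys is independence from those references and a transparent derivation of the linear independence of $\{A_{\bullet,j}\}_{j\in S}$ as a consequence of $(b)$ rather than an external hypothesis.
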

\begin{proof}
Statement (a) holds if and only if there is no $v\ne 0$ such that $Av=0$ and 
$v_{S^c}\ge 0$, compare for e.g. \cite[Thm. 1]{Man09ProbInteger}. Thus
(a) $\gdw$ (b). By \cite[Lem. 5.1]{DonohoT10}, (a) $\gdw (c)$ holds as well.
\end{proof}
If, in addition, all $k$ nonzero entries are equal to each other, 
then a stronger characterization holds.
\begin{theorem}[{\cite[Prop.~2]{Man09ProbInteger}}]
\label{thm:individual_uniqueness_BIN} Let $A\in\Rmn$ be an arbitrary matrix.
Then the following statements are equivalent:
\begin{itemize}
\item[(a)] The $k$-sparse binary vector $x^*\in\{0,1\}^n$ supported on
$S$, $|S|=k$, is the unique
solution of $Ax=Ax^*$ with $x\in[0,1]^n$.
\item[(b)] Every nonzero null space vector cannot have all its
negative components in $S$ and the positive ones in $S^c$. 
\item[(c)] There exists a vector $r$ such that $\Diag(z^*)A^\top r > 0$, with
$z^*:=e-2 x^*$.
\item[(d)] $0 \in \R^{m}$ is not contained in the convex 
hull of the columns of $A\Diag(z^*)$, i.e. $0\notin\conv\{z^*_1\col{A}{1},\dots,
z^*_n\col{A}{n},0\}$, with $z^*:=e-2 x^*$.
\end{itemize}
\end{theorem}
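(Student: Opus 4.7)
The plan is to establish the chain (a) $\Leftrightarrow$ (b) $\Leftrightarrow$ (c) $\Leftrightarrow$ (d) by chaining a direct null-space argument with Gordan's theorem of the alternative.

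For (a) $\Leftrightarrow$ (b), I would pass between box-feasible alternatives and signed null-space directions. Any $x \in [0,1]^n$ with $x \neq x^*$ and $Ax = Ax^*$ produces the nonzero vector $v := x - x^* \in \ker A$, whose entries on $S$ lie in $[-1, 0]$ (because $x^*_S = \eins$ and $x_S \in [0,1]^{|S|}$) and whose entries on $S^c$ lie in $[0, 1]$ (because $x^*_{S^c} = 0$). So all negative components of $v$ sit in $S$ and all positive components in $S^c$, negating (b). Conversely, given any such $v \in \ker A \setminus \{0\}$, the vector $x^* + \alpha v$ remains in $[0,1]^n$ for sufficiently small $\alpha > 0$ and yields a second solution.

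For (b) $\Leftrightarrow$ (c), I would invoke Gordan's theorem of the alternative. Since $z^*_i \in \{-1,+1\}$, the matrix $\Diag(z^*)$ is an involution; setting $\tilde A := A\Diag(z^*)$ and $u := \Diag(z^*) v$, the conditions ``$v \in \ker A$ with negatives in $S$ and positives in $S^c$'' translate exactly to ``$u \geq 0$ and $\tilde A u = 0$''. Hence (b) fails iff there exists $u \geq 0, u \neq 0$ with $\tilde A u = 0$. By Gordan's theorem, this fails iff there exists $r$ with $\tilde A^\top r > 0$, which, since $\tilde A^\top r = \Diag(z^*) A^\top r$, is precisely (c). The equivalence (c) $\Leftrightarrow$ (d) then follows by reading (d) as the assertion that $0$ does not lie in the convex hull of the columns of $\tilde A$: this is the nonexistence of $\lambda \geq 0$ with $\eins^\top \lambda = 1$ and $\tilde A \lambda = 0$, which by positive homogeneity is the nonexistence of a nontrivial nonnegative $u \in \ker \tilde A$, and so is again equivalent to (c). Equivalently, (c) $\Leftrightarrow$ (d) is an instance of the hyperplane separation theorem, with the vector $r$ of (c) playing the role of a separating normal between $0$ and the compact convex hull.

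Mathematically the argument is essentially routine, gluing an elementary null-space observation to two alternative/separation theorems; no step presents a genuine obstacle. The only delicate point is cosmetic: parsing statement (d) as written, namely $\conv\{z^*_1 A_{\bullet,1}, \dots, z^*_n A_{\bullet,n}, 0\}$, the set always trivially contains $0$, so (d) is meaningful only when read as the assertion that $0$ does not lie in the convex hull of the vectors $\{z^*_i A_{\bullet, i}\}_{i=1}^n$ alone; I would state this interpretation explicitly at the outset of the (c) $\Leftrightarrow$ (d) step.
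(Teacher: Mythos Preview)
Your proposal is correct and follows essentially the same route as the paper: the paper also reduces (a)$\Leftrightarrow$(b) to the null-space sign condition (citing \cite{Man09ProbInteger} rather than spelling out the $v=x-x^{*}$ / $x^{*}+\alpha v$ argument you give), then passes to (c) via the same substitution $u=\Diag(z^{*})v$ and Gordan's theorem, and reads (d) as the separating-hyperplane reformulation. Your observation that (d) as literally written (with $0$ included in the convex hull) is vacuous and must be read without the trailing $0$ is a valid catch of a typo in the statement.
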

\begin{proof}
If $x^*$ is unique in $\{0,1\}^n$, it is unique in $[0,1]^n$ as well.
Uniqueness in $[0,1]^n$ holds, for e.g. by \cite[Thm. 1]{Man09ProbInteger},
if there is no $v\ne 0$ such that $Av=0$, $v_{S^c}\ge 0$ and $v_{S}\le 0$, which shows
equivalence to (b).
With $D:=\Diag(e-2 x^*)$ and $DD=I$, (b) can be rewritten as follows:
there is no $v\ne 0$ such that $ADDv=0, Dv\ge 0$, $Dv\ne 0$.
With $u:=Dv$, the above condition becomes:
$$
ADu=0, u\ge 0, u\ne 0\ , \rm{has\ no\ solution}\ ,
$$
which by  Gordon's theorem of alternative  gives the equivalent certificate (c):
\begin{equation}\label{eq:Mangasarian-primal}
\exists r \quad  {\rm such\ that\ } D A^\top r > 0 \ .
\end{equation}
In other words, a small $k$-subset of the columns of $A$, are ''flipped'' by multiplication with $-1$, and these modified columns together with all remaining ones can be separated from the origin, which shows equivalence to (d), i.e. $0$ is not contained in the convex hull of these points.
\end{proof}

Note that statement (d) is related to the necessary condition for uniqueness
in \cite[Thm. 1]{WangIEEE}. We further comment on Thm. \ref{thm:individual_uniqueness_BIN} (c) from a probabilistic viewpoint. Condition (c) says that all points defined by the columns of $A\Diag(e-2 x^*)$ are located in a single half space defined by a hyperplane through the origin with normal $r$. Conditions under which this is likely to hold were studied by Wendel \cite{Wendel-62}. This problem is also directly related to the basic pattern recognition problem concerning the linear classification\footnote{In this context, ``linear'' means affine decision functions.} of any dichotomy of a finite point set \cite{CoverSeparability-65}.

Assuming $n$ points in $\R^{m}$ to be in general position, that is any subset of $m$ vectors is linearly independent, and that the distribution from which the given point set is regarded as an i.i.d.~sample set is symmetric with respect to the origin, then condition \eqref{eq:Mangasarian-primal} holds with probability
\begin{equation} \label{eq:WendelPR}
\Pr(n,m) = \frac{1}{2^{n-1}} \sum_{i=0}^{m-1} 
\binom{n-1}{i}.
\end{equation}
As Figure \ref{fig:WendelPR} illustrates, $\Pr(n,m) = 1$ if $n/m \leq 1$, due to the well known fact that any dichotomy of $m+1$ points in $\R^{m}$ can be separated by a hyper-plane \cite{Vapnik1971,Devroye1996}. For increasing dimension $m \to \infty$, this also holds almost surely if $n/m < 2$, which can be easily deduced by applying a binomial tail bound.
Accordingly, assuming that the measurement matrix $A$ conforms to the assumptions, the authors of 
\cite{Man09ProbInteger} conclude that an existing binary solution to \eqref{eq:Ax=b}  is unique with probability \eqref{eq:WendelPR} for \emph{underdetermined} systems with ratio $m/n > 1/2$.

We adopt this viewpoint in Section \ref{sec:recovery-perturbed} and develop a criterion for unique recovery with high probability using the \emph{given} measurement matrix \eqref{eq:def-AdD}, based on a probabilistic average case analysis of condition \eqref{eq:Hassibi-condition} (Section \ref{sec:reduced-system}). This criterion currently characterizes best the design of tomographic scenarios (Fig.~\ref{fig_1}), with recovery performance guaranteed with high probability. We conclude this section by mentioning that \emph{exact
nonasymptotic} recovery results for a $k$-sparse nonnegative vector are obtained in \cite[Thm. 1.10]{DonohoT10} by exploiting Wendel's theorem. Donoho and Tanner show that the probability of uniqueness of a $k$-sparse nonnegative vector equals $\Pr(n-m,n-k)$, provided $A$ satisfies certain conditions which do not hold in
our considered application.   

\begin{figure}
\centerline{
\includegraphics[width=0.5\textwidth]{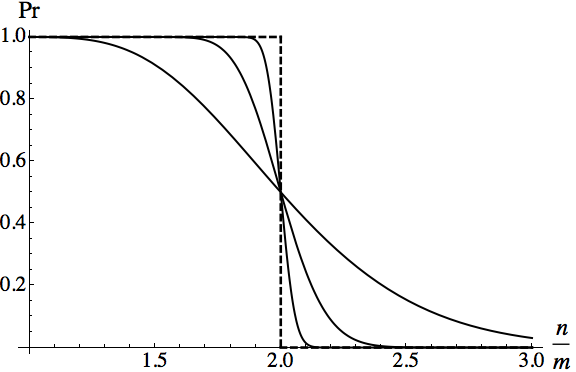}
}
\caption{
The probability $\Pr(n,m)$ given by \eqref{eq:WendelPR} that $n$ points in general position in $\R^{m}$ can be linearly separated \cite{Wendel-62}. This holds with probability $\Pr(n,m)=1$ for $n/m \leq 1$, and with $\Pr(n,m) \to 1$ if $m \to \infty$ and $1 \leq n/m < 2$.
}
\label{fig:WendelPR}
\end{figure}

%
\section{Expanders, Perturbation, and Weak Recovery}
\label{sec:expanders}

This section collects recent results of recovery properties based on expanders associated with sparse measurement matrices, possibly after a random perturbation of the non-zero matrix entries. Section \ref{sec:wang-hassibi-application} applies these results to our specific setting in a form suitable for a probabilistic analysis of recovery performance presented in Section \ref{sec:weak-equivalence}.

\subsection{Expanders and Recovery}
The following theorem is a slight variation of Theorem 4 in \cite{WangIEEE} tailored to our specific setting.
\begin{theorem}\label{thm:wang} Let $A$ be the adjacency
matrix of a $(\nu,\delta)$-unbalanced expander 
and $1 \geq \delta>\frac{\sqrt{5}-1}{2}$.
Then for any $k$-sparse vector $x^*$ with $k\le \frac{\nu}{(1+\delta)}$, the
solution set $\{x \colon Ax=Ax^*,x \ge 0\}$ is a singleton.
\end{theorem}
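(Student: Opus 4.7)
The plan is to invoke the null-space characterization in Theorem~\ref{thm:individual_uniqueness_POS}: uniqueness of the nonnegative solution fails precisely when there exists a nonzero $v\in\ker(A)$ with $I^-(v)\subseteq S:=\supp(x^*)$. I would assume such a $v$ exists for contradiction, writing $T:=I^-(v)$, $P:=I^+(v)$, and $k':=|T|\leq k$; nonnegativity of $A$ together with $Av=0$ and the nonvanishing columns of $A$ prevent $v$ from being single-signed, so both $T$ and $P$ are nonempty.

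The first structural observation is $\calN(P)\subseteq\calN(T)$: if $i\in\calN(P)$ then some $j\in\calN(i)\cap P$ contributes $v_j>0$ to the row sum $(Av)_i=0$, forcing at least one other neighbor of $i$ to lie in $T$. The second ingredient is the standard unique-neighbor consequence of expansion: for any $X\subseteq L$ with $|X|\leq\nu$, doubly counting the $\ell|X|$ edges out of $X$ shows that at least $(2\delta-1)\ell|X|$ vertices in $\calN(X)$ have exactly one neighbor in $X$, a strictly positive number under $\delta>(\sqrt{5}-1)/2>1/2$.

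The proof then splits on the size of $|P|$. When $|P|\leq\nu-k'$ I would apply the unique-neighbor lemma to $\tilde S:=T\cup P$: at any unique neighbor $i$ with lone $\tilde S$-neighbor $j$, the identity $(Av)_i=0$ combined with $\calN(i)\setminus\tilde S\subseteq T^c$ forces $v_j\leq 0$, ruling out $j\in P$ and thus $j\in T$. But then $i$ lies in $U_T^{\mrm{full}}\setminus\calN(P)$, which is empty because the cancellation requirement places $U_T^{\mrm{full}}\subseteq\calN(P)$, contradicting $|U(\tilde S)|\geq(2\delta-1)\ell|\tilde S|>0$. When $|P|>\nu$, any $P'\subseteq P$ of size $\nu$ satisfies $\delta\ell\nu\leq|\calN(P')|\leq|\calN(T)|\leq\ell k'\leq\ell\nu/(1+\delta)$ by expansion of $P'$ together with $\calN(P')\subseteq\calN(T)$, which simplifies to $\delta(1+\delta)\leq 1$, i.e., $\delta\leq(\sqrt{5}-1)/2$, contradicting the hypothesis.

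The hard part will be the intermediate range $\nu-k'<|P|\leq\nu$, where neither extreme argument applies verbatim. My plan here is to pick $P'\subseteq P$ of size $\nu-k'$, form $\tilde S:=T\cup P'$ of size $\nu$, and combine the refined upper bound $|U(\tilde S)|\leq|\calN(T)\setminus\calN(P')|\leq|\calN(T)|-\delta\ell(\nu-k')$ (using $\calN(P')\subseteq\calN(T)$ and expansion of $P'$) with the lower bound $|U(\tilde S)|\geq(2\delta-1)\ell\nu$; exploiting further that $U_T^{\mrm{full}}\subseteq\calN(P)$ while $U(\tilde S)\cap\calN(P')=\emptyset$ confines $U(\tilde S)$ to $\calN(P\setminus P')$, whose cardinality is bounded by $\ell(|P|-(\nu-k'))$. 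Juggling these three bounds under $k'\leq\nu/(1+\delta)$ should collapse once more to the quadratic $\delta(1+\delta)>1$, matching the golden-ratio threshold that already governs the regime $|P|>\nu$.
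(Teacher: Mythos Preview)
Your Cases 1 and 2 are fine, but Case 3 does not close at the golden-ratio threshold. Combining your unique-neighbor lower bound $(2\delta-1)\ell\nu\le|U(\tilde S)|$ with the upper bound $|U(\tilde S)|\le \ell k'-\delta\ell(\nu-k')$ and the constraint $k'\le\nu/(1+\delta)$ yields $(3\delta-1)\nu\le(1+\delta)k'\le\nu$, hence a contradiction only for $\delta>2/3$. The third bound $|U(\tilde S)|\le\ell(|P|-(\nu-k'))$ together with $|P|\le\nu$ is weaker still (its threshold is $(-1+\sqrt{17})/4\approx 0.78$), and optimizing over convex combinations of the two upper bounds does not improve on $2/3$. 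The range $\delta\in\big((\sqrt{5}-1)/2,\,2/3\big]$ is therefore left uncovered by your scheme, so the ``juggling'' you anticipate cannot produce the quadratic $\delta(1+\delta)>1$.

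The paper's argument avoids both the unique-neighbor lemma and the case split on $|P|$. It exploits the \emph{two-sided} identity $\calN(I^-(v))=\calN(I^+(v))=\calN(\supp v)$ (you only invoke the inclusion $\calN(P)\subseteq\calN(T)$), which immediately gives $|\calN(\supp v)|\le\ell s$ for $s:=|I^-(v)|$. Since $|I^+(v)|\ge|\calN(I^+(v))|/\ell=|\calN(I^-(v))|/\ell\ge\delta s$, one has $|\supp v|\ge(1+\delta)s$; choosing any $\tilde S\subseteq\supp v$ of size $\lfloor(1+\delta)s\rfloor\le\nu$ and applying expansion \emph{directly} gives $|\calN(\tilde S)|\ge\delta\ell(1+\delta)s>\ell s\ge|\calN(\supp v)|\ge|\calN(\tilde S)|$, a contradiction precisely when $\delta(1+\delta)>1$. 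The loss in your approach comes from trading the full expansion factor $\delta$ on a well-chosen subset of $\supp v$ for the weaker unique-neighbor factor $2\delta-1$.
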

\begin{proof} We will show that every nonzero null space vector has 
\emph{at least} $\frac{\nu}{(1+\delta)}+1$ negative and positive entries. Then Theorem \ref{thm:AllPOS} will provide the desired assertion.

Suppose without loss of generality that there is a vector $v\in\ker({A})\setminus \{0\}$
with 
\begin{equation}\label{eq:ker1}
s:=|I^{-}(v)|\le \frac{\nu}{(1+\delta)} \ .
\end{equation}
Then
\begin{equation}\label{eq:ker2}
\ell |I^{-}(v)| \ge |\calN(I^{-}(v))|\ge \delta \ell s, 
\end{equation}
where the second inequality follows by assumption due to the expansion property. 

Denoting by $S$ the support of $v$,  $S=I^{-}(v)\cup I^{+}(v)$, 
we have
\begin{equation}\label{eq:equalNeigh}
\calN(I^{-}(v))=\calN(I^{+}(v))=\calN(S) \ ,
\end{equation}
since otherwise $ Av\ne 0$ because $A$ is non-negative. 

From $\ell |I^{+}(v)|\ge |\calN(I^{+}(v))|$, \eqref{eq:equalNeigh}
and \eqref{eq:ker2},
we obtain
\begin{equation}\label{eq:ker3}
|I^{+}(v)| \ge \delta s \ . 
\end{equation}

Thus,
\begin{equation}\label{eq:ker33}
|S|=|I^{-}(v)| +|I^{+}(v)| \ge 2 \delta s \geq (1+\delta) s. 
\end{equation}

Let $\tilde S \subseteq S$ such that $|\tilde S|=\lfloor (\delta +1) s \rfloor$.
Thus $|\tilde S|\le \nu$ and
\begin{equation}\label{eq:ker4}
|\calN(\tilde S)|\ge \delta \ell |\tilde S|\ge \delta \ell (\delta +1) s
> s\ell\ 
\end{equation}
provided $\delta (1+\delta) > 1 \;\gdw\; \delta > (\sqrt{5}-1)/2$.
Summarizing, we get
$s\ell<|\calN(\tilde S)|\leq|\calN(S)|=|\calN(I^{-}(v))|\le s\ell$,
hence a contradiction.
\end{proof}
The assertion of Theorem \ref{thm:wang} solely relies on the expansion property of the measurement matrix $A$. Theorem \ref{thm:SP} below will be based on it and in turn the results of Section \ref{sec:recovery-unperturbed}.

\subsection{Perturbed Expanders and Recovery}
\label{sec:Hassibi}
We describe next an alternative route based on the \textbf{complete (Kruskal) rank} $r_{0} = r_{0}(A)$ of a measurement matrix $A$. This is the maximal integer $r_{0}$ such that every subset of $r_{0}$ columns of $A$ is linearly independent.

While this number is combinatorially difficult to compute in practice, both the number and the corresponding recovery performance can be enhanced by relating it to a particular expansion property of the bipartite graph associated to a \emph{perturbed} measurement matrix $\tilde A$. The latter can be easily computed in practice while preserving its sparsity, i.e.~the constant left-degree $\ell$.
\begin{theorem}[{\cite[Thm.~6.2]{Petra2009}, \cite[Thm.~4.1]{HassibiIEEE}}]\label{thm:Hassibi-4-1}
 Let $A$ be a non-negative matrix with $\ell$ non-zero entries in each column and complete rank $r_{0}=r_{0}(A)$. Then $|I^{-}(v)| \geq r_{0}/\ell$ for all nullspace vectors $v \in \ker(A)$.
\end{theorem}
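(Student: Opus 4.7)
The plan is to bracket the rank of $A_{\supp(v)}$ from both sides, using non-negativity to shrink the ambient coordinate subspace it lies in, and using the complete-rank hypothesis to guarantee that rank is still large.

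Fix $v \in \ker(A)\setminus\{0\}$ and set $S^+ = I^+(v)$, $S^- = I^-(v)$, $S = S^+ \cup S^- = \supp(v)$. Since $A \geq 0$ has no zero column, neither $v \geq 0$ nor $v \leq 0$ can be a nonzero nullspace vector, so both $S^+$ and $S^-$ are non-empty. The first key step is to observe that non-negativity forces $\calN(S^+) = \calN(S^-)$: rewriting $(Av)_i = 0$ row by row as
\[
\sum_{j \in S^+} A_{ij}\, v_j \;=\; \sum_{j \in S^-} A_{ij}\,(-v_j),
\]
one sees that both sides are non-negative, and the left (resp.\ right) side is strictly positive if and only if $i \in \calN(S^+)$ (resp.\ $i \in \calN(S^-)$). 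Hence membership in the two neighbourhoods agrees at every row.

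As a consequence, every column of the submatrix $A_S$ is supported in $\calN(S^-)$, so the column span of $A_S$ sits inside a coordinate subspace of dimension $|\calN(S^-)| \leq \ell\,|S^-|$, where the last bound uses the constant column sparsity $\ell$. This yields
\[
\rank(A_S) \;\leq\; \ell\,|I^-(v)|.
\]
For the matching lower bound, note that $A_S v_S = 0$ with $v_S$ nowhere vanishing, so the columns of $A_S$ are linearly dependent; by the definition of complete rank this forces $|S| \geq r_0 + 1$. But then $A_S$ contains a sub-collection of $r_0$ columns, and any such sub-collection is linearly independent, so $\rank(A_S) \geq r_0$. Combining the two estimates gives $r_0 \leq \ell\,|I^-(v)|$, which is the desired inequality.

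The step I expect to do the real work is the neighbourhood identity $\calN(S^+) = \calN(S^-)$: it is what converts the scalar hypothesis ``$A$ non-negative'' into a coordinate-subspace bound on the span of the linearly dependent columns of $A_S$. Once this is in place, the conclusion is just a clean juxtaposition with the definition of complete Kruskal rank.
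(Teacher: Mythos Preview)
Your argument is correct. The paper itself does not supply a proof of this theorem---it is quoted from \cite{Petra2009,HassibiIEEE}---so there is no in-paper proof to compare against directly. That said, your key step, the neighbourhood identity $\calN(S^{+})=\calN(S^{-})$, is exactly the device the paper uses in its proof of Theorem~\ref{thm:wang} (see Eqn.~\eqref{eq:equalNeigh}), and your upper bound $\rank(A_{S})\le |\calN(S^{-})|\le \ell\,|I^{-}(v)|$ followed by the Kruskal-rank lower bound $\rank(A_{S})\ge r_{0}$ is the standard route in the cited references. Two minor remarks: the theorem as stated in the paper omits the qualifier $v\neq 0$, which you rightly insert; and your observation that $A$ has no zero column implicitly uses $\ell\ge 1$, which is harmless in the intended setting.
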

\begin{remark} \label{rem:Hassibi-recovery}
In view of Theorem \ref{thm:AllPOS}, (c), Theorem \ref{thm:Hassibi-4-1} says that all $k$-sparse non-negative vectors $x$ can be uniquely recovered if $k \leq \lceil r_{0}/\ell - 1 \rceil$.
\end{remark}
The following Lemma asserts that by a perturbation of the measurement matrix the complete rank, and hence the recovery property, may be enhanced provided all subsets of columns, up to a related cardinality, entail an expansion that is \emph{less} however than the one required by Theorem \ref{thm:wang}.
\begin{lemma}[{\cite[Lemma 4.2]{HassibiIEEE}}]\label{lem:Hassibi-4-2}
Let $A$ be a non-negative matrix with $\ell$ non-zero entries in each column. Suppose that for a submatrix formed by $\tilde r_{0}$ columns of $A$ it holds that $|\mc{N}(X)| \geq |X|$, for each subset $X \subset C$ of columns of cardinality $|C| \leq \tilde r_{0}$, and with respect to the bipartite graph induced by $A$. Then there exists a perturbed matrix $\tilde A$ that has the same structure as $A$ such that its complete rank satisfies $r_{0}(\tilde A) \geq \tilde r_{0}$.
\end{lemma}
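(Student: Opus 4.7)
The plan is to combine Hall's marriage theorem with a genericity argument on the space of allowed perturbations. The key observation is that the hypothesis $|\calN(X)| \geq |X|$ for every $X$ of size at most $\tilde r_0$ is precisely Hall's bipartite matching condition restricted to column sets of size up to $\tilde r_0$. So for every such $X$ there is an injection $\pi \colon X \hookrightarrow \calN(X)$ with $A_{\pi(j),j} \neq 0$ for all $j \in X$.

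First I would reduce the claim $r_0(\tilde A) \geq \tilde r_0$ to showing: for every $X \subset [n]$ with $|X| = \tilde r_0$, some $|X| \times |X|$ submatrix of $\tilde A_{\cdot,X}$ has nonzero determinant (smaller subsets then inherit linear independence). Fix such an $X$, obtain a Hall matching $\pi$ as above, and look at the minor $\det \tilde A_{\pi(X),X}$. Treat the nonzero entries $\{\tilde A_{ij} \colon (i,j) \in \supp(A)\}$ as algebraically independent indeterminates. By the Leibniz expansion,
\[
\det \tilde A_{\pi(X),X} \;=\; \sum_{\sigma} \sign(\sigma)\, \prod_{j \in X} \tilde A_{\sigma(j),j},
\]
where $\sigma$ ranges over bijections $X \to \pi(X)$ with $(\sigma(j),j) \in \supp(A)$ for every $j$. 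Distinct permutations produce distinct monomials in the indeterminates, so no two terms can cancel; the matching $\pi$ itself supplies one such monomial, so the polynomial is not identically zero.

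Next I would invoke a measure-theoretic genericity argument. The set of admissible perturbations $\{\tilde A_{ij}\}_{(i,j)\in\supp(A)}$ is a full-dimensional box $\prod_{(i,j)} [A_{ij}-\veps, A_{ij}+\veps]$ of positive Lebesgue measure. The zero-locus of a nonzero polynomial has Lebesgue measure zero, and there are only finitely many subsets $X$ of size $\tilde r_0$; the union of the corresponding zero sets is therefore still of measure zero. Hence almost every admissible $\tilde A$ has all such determinants nonzero simultaneously. Column normalization (as in the definition of the perturbed matrix) does not affect linear independence, so it can be applied after the fact. Any $\tilde A$ obtained this way has $r_0(\tilde A) \geq \tilde r_0$, and by construction it shares the sparsity pattern (and constant column support $\ell$) of $A$.

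The main obstacle is the transition in Step 2/3: one must recognize that the stated expansion property is exactly Hall's condition applied to every sub-set of $X$ (not just $X$ itself), and then see that algebraic independence of the nonzero entries prevents the Leibniz expansion from accidentally vanishing. Once that is pinned down, Steps 1, 4, and 5 are standard. A minor technical point is handling the normalization step in the definition of $\tilde A$, but this is immediate since scaling columns cannot destroy linear independence.
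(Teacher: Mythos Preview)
The paper does not supply its own proof of this lemma; it is quoted verbatim from \cite[Lemma~4.2]{HassibiIEEE} and used as a black box. Your argument is correct and is in fact the standard route to this statement: Hall's condition on all subsets of size at most $\tilde r_{0}$ yields, for each $\tilde r_{0}$-subset $X$, a system of distinct representatives $\pi \colon X \hookrightarrow \calN(X)$ inside $\supp(A)$; treating the nonzero entries as algebraically independent indeterminates, the Leibniz expansion of $\det \tilde A_{\pi(X),X}$ is a nonzero polynomial (distinct matchings yield distinct monomials, and $\pi$ contributes one); and a finite union of proper algebraic varieties is Lebesgue-null in the perturbation box, so a generic choice of $\tilde A$ makes all these minors nonzero simultaneously. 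The closing remark that column normalization preserves linear independence is exactly what is needed to reconcile with the paper's definition of the perturbed matrix. Nothing is missing.
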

Theorem \ref{thm:SPCS1} below and Section \ref{sec:recovery-perturbed} will be based on Theorem \ref{thm:Hassibi-4-1} and Lemma \ref{lem:Hassibi-4-2}.

\subsection{Weak Reconstruction Guarantees}
\label{sec:wang-hassibi-application}

We introduce some further notions used subsequently to state our results. Let $A$ denote the matrix $A_{d}^{D}$ defined by \eqref{eq:def-AdD}, and consider a subset $X \subset C$ of $|X|=k$ columns and a corresponding $k$-sparse vector $x$. Then $b = A x$ has support $\mc{N}(x)$, and we may remove the subset of $\mc{N}(X)^{c} = (\mc{N}(X))^{c}$ rows from the linear system $A x = b$ corresponding to $b_{r}=0,\, \forall r \in R$. Moreover, based on the observation $\mc{N}(X)$, we know that
\begin{equation}\label{eq:reduced-dimensions}
 X \subseteq \mc{N}(\mc{N}(X))
 \qquad\text{and}\qquad
 \mc{N}(\mc{N}(X)^{c}) \cap X = \emptyset \ .
\end{equation}
Consequently, we can restrict the linear system $A x = b$ to the subset of columns
$\mc{N}(\mc{N}(X)) \setminus \mc{N}(\mc{N}(X)^{c}) \subset C$. This will be detailed below by Proposition \ref{prop:redfeasSet}.

\vspace{0.5cm}
In practical applications, the reconstruction of a random $k$-sparse vector $x$ will be based on a reduced linear system with the above dimensions. These dimensions will be the same for \emph{all} random sets $X = \supp(x)$ contained in $\mc{N}(\mc{N}(X))$. Consequently, in view of a probabilistic average case analysis conducted in Section \ref{sec:weak-equivalence}, it suffices to measure the expansion with respect to these sets.

Taking this into account, the following theorem tailors Theorem \ref{thm:wang} to our specific setting.
\begin{theorem}\label{thm:SP}  Let $A$ be the adjacency
matrix of a bipartite graph such that for all
random subsets $X \subset C$ of $|X| \leq k$ left nodes, the set of neighbors $\calN(X)$ of $X$ satisfies
\begin{equation} \label{eq:condition-Wang}
|\calN(X)| \geq \delta 
\ell |\calN(\calN(X))\setminus \calN(\calN(X)^c)| 
\qquad\text{with}\qquad 
\delta>\frac{\sqrt{5}-1}{2}.
\end{equation}
Then, for any $k$-sparse vector $x^*$, the
solution set $\{x \colon  Ax=Ax^*,x \ge 0\}$ is a singleton.
\end{theorem}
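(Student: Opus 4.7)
The plan is to invoke the equivalence (a)$\Leftrightarrow$(c) of Theorem~\ref{thm:AllPOS} to reduce the uniqueness of every $k$-sparse non-negative solution to a purely null-space statement: every $v \in \ker(A) \setminus \{0\}$ must satisfy $|I^{-}(v)| \geq k+1$. I would then argue by contradiction, supposing some $v \in \ker(A) \setminus \{0\}$ with $s := |I^{-}(v)| \leq k$, and drive this into conflict with the expansion condition~\eqref{eq:condition-Wang}.

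Before doing that, I would record two structural facts about $v$ that hold for any $A \geq 0$ with $Av = 0$. Writing $S = \supp(v) = I^{+}(v) \cup I^{-}(v)$, the identity $Av = 0$ together with $A \geq 0$ forces $\mathcal{N}(I^{+}(v)) = \mathcal{N}(I^{-}(v)) = \mathcal{N}(S)$, since any row reached only by the positive (resp.\ negative) part of $v$ would contribute a non-zero coordinate to $Av$. Consequently every column $j \in S$ satisfies $\mathcal{N}(\{j\}) \subseteq \mathcal{N}(S) = \mathcal{N}(I^{-}(v))$, which places $j$ in $\mathcal{N}(\mathcal{N}(I^{-}(v)))$ while keeping it out of $\mathcal{N}(\mathcal{N}(I^{-}(v))^{c})$; hence $S \subseteq \mathcal{N}(\mathcal{N}(I^{-}(v))) \setminus \mathcal{N}(\mathcal{N}(I^{-}(v))^{c})$. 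This inclusion is the bridge between the unusual right-hand side of~\eqref{eq:condition-Wang} and the support of $v$ itself.

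I would next apply the hypothesis~\eqref{eq:condition-Wang} with $X = I^{-}(v)$, which is legitimate because $|X| = s \leq k$. Combined with the inclusion above, this yields $|\mathcal{N}(S)| = |\mathcal{N}(I^{-}(v))| \geq \delta \ell |S|$. Two trivial double-counting estimates, $|\mathcal{N}(I^{-}(v))| \leq \ell s$ and $|\mathcal{N}(I^{+}(v))| \leq \ell(|S|-s)$, then give, respectively, $|S| \leq s/\delta$ and $|S| \geq s/(1-\delta)$. Since the hypothesis forces $\delta > (\sqrt{5}-1)/2 > 1/2$, we have $1-\delta < \delta$, so $s/(1-\delta) > s/\delta$, contradicting the sandwich $s/(1-\delta) \leq |S| \leq s/\delta$. (The degenerate case $s = 0$ can be ruled out directly, since any non-zero $v \geq 0$ produces $Av \neq 0$ as every column of $A$ has $\ell \geq 1$ non-zero entries.)

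The main conceptual hurdle I anticipate is seeing why the somewhat exotic quantity $\mathcal{N}(\mathcal{N}(X)) \setminus \mathcal{N}(\mathcal{N}(X)^{c})$ on the right-hand side of~\eqref{eq:condition-Wang} earns its keep: once one observes that the full support $S$ of any putative null-space obstruction automatically lies in this set when $X = I^{-}(v)$, the hypothesis upgrades the standard expansion lower bound $|\mathcal{N}(S)| \geq \delta\ell |I^{-}(v)|$ to the strictly stronger $|\mathcal{N}(S)| \geq \delta\ell |S|$, and the doubling step used in the proof of Theorem~\ref{thm:wang} can be replaced by the cleaner two-sided estimate above.
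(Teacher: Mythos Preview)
Your proof is correct and follows the same null-space reduction via Theorem~\ref{thm:AllPOS}(c) that the paper uses for Theorem~\ref{thm:wang}, which is the template the paper invokes for Theorem~\ref{thm:SP} (the paper gives no separate proof, only the remark that Theorem~\ref{thm:SP} ``tailors Theorem~\ref{thm:wang} to our specific setting'').

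Where you diverge from a straight transcription of the Theorem~\ref{thm:wang} argument is at the combinatorial core. The paper's proof of Theorem~\ref{thm:wang} applies the expansion hypothesis \emph{twice}: once to $I^{-}(v)$ to get $|S|\ge(1+\delta)s$, and then again to a subset $\tilde S\subseteq S$ of size $\lfloor(1+\delta)s\rfloor$ to force $|\mathcal N(\tilde S)|>\ell s$; this second pass is where the condition $\delta(1+\delta)>1$, i.e.\ $\delta>(\sqrt5-1)/2$, enters. Your key observation is that the modified right-hand side in~\eqref{eq:condition-Wang} already dominates $|S|$, because the full support $S$ of any null-space vector automatically sits inside $\mathcal N(\mathcal N(I^{-}(v)))\setminus\mathcal N(\mathcal N(I^{-}(v))^{c})$. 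A \emph{single} application of the hypothesis then gives $|\mathcal N(S)|\ge\delta\ell|S|$ directly, and the two trivial degree bounds produce the squeeze $s/(1-\delta)\le|S|\le s/\delta$. This is both shorter and strictly stronger: your argument contradicts the squeeze as soon as $\delta>1/2$, so the threshold $(\sqrt5-1)/2$ in the statement is an artefact carried over from Theorem~\ref{thm:wang} rather than a genuine requirement of Theorem~\ref{thm:SP}.
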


Likewise, the following theorem applies the statements of Section \ref{sec:Hassibi} to our specific setting.
\begin{theorem}\label{thm:SPCS1}  Let $A$ be the adjacency
matrix of a bipartite graph such that for all
subsets $X \subset C$ of $|X| \leq k$ left nodes, the set of neighbors $\calN(X)$ of $X$ satisfies
\begin{equation} \label{eq:Hassibi-condition}
|\calN(X)| \geq \delta 
\ell |\calN(\calN(X))\setminus \calN(\calN(X)^c)| 
\qquad\text{with}\qquad 
\delta > \frac{1}{\ell}.
\end{equation}
Then, for any $k$-sparse vector $x^*$, there exists a perturbation $\tilde A$ of $A$ such that the
solution set $\{x \colon \tilde Ax=\tilde Ax^*,x \ge 0\}$ is a singleton.
\end{theorem}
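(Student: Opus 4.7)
The plan is to imitate the proof structure of Theorem~\ref{thm:SP}, but with Theorem~\ref{thm:wang} replaced by the perturbation result Lemma~\ref{lem:Hassibi-4-2}. First, I fix a $k$-sparse $x^*\geq 0$ with $X:=\supp(x^*)$, set $R':=\calN(X)$, $C':=\calN(\calN(X))\setminus \calN(\calN(X)^{c})$, and $A':=A_{R',C'}$. As in the reduction discussed before Theorem~\ref{thm:SP}, nonnegativity together with $(Ax)_r=0$ for $r\notin R'$ forces any feasible $x\geq 0$ to satisfy $\supp(x)\subseteq C'$, so uniqueness of $x^*$ in the full system is equivalent to uniqueness of $x^*_{C'}$ in $\{y\geq 0\colon \tilde A' y=\tilde A' x^*_{C'}\}$. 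By definition of $C'$, every column of $A'$ retains its $\ell$ non-zero entries (each $j\in C'$ has $\calN(\{j\})\subseteq R'$), and combined with $X\subseteq C'$ this yields $\calN(C')=R'$.

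Next I verify the Hall-type expansion required by Lemma~\ref{lem:Hassibi-4-2} in the reduced graph $(C',R')$. Applying hypothesis~\eqref{eq:Hassibi-condition} to $X$ itself gives $|R'|\geq \delta\ell |C'|>|C'|$, since $\delta\ell>1$; this handles the endpoint $Y=C'$. For any $Y\subseteq C'$ with $|Y|\leq k$, applying~\eqref{eq:Hassibi-condition} to $Y$ (viewed as a subset of $C$), and using $Y\subseteq \calN(\calN(Y))\setminus \calN(\calN(Y)^{c})$, yields $|\calN_{A'}(Y)|=|\calN(Y)|\geq \delta\ell|Y|>|Y|$. Granting the full Hall condition on $C'$, Lemma~\ref{lem:Hassibi-4-2} applied with $\tilde r_0=|C'|$ furnishes a perturbation $\tilde A'$ of $A'$ with complete rank $|C'|$, hence full column rank. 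Then $\ker \tilde A'=\{0\}$, so $y=x^*_{C'}$ is the unique solution of $\tilde A' y=\tilde A' x^*_{C'}$ (independent of nonnegativity). Extending $\tilde A'$ to a full perturbation $\tilde A$ of $A$ by perturbing entries outside $R'\times C'$ arbitrarily produces the claimed $\tilde A$.

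The main obstacle is closing the Hall condition on the intermediate range $k<|Y|<|C'|$, where $|C'|$ can in general reach $|\calN(X)|/(\delta\ell)\leq k/\delta<k\ell$ and \eqref{eq:Hassibi-condition} offers no direct bound. Bridging this range demands a supplementary matching-type argument in the reduced bipartite graph $(C',R')$, leveraging $\calN(C')=R'$, the density bound $|R'|\geq \delta\ell|C'|$, and the constant left-degree~$\ell$ to propagate Hall's condition from the small-$Y$ regime to the whole of $C'$; this is the step which forces $\delta > 1/\ell$ and is the delicate analogue of the $\delta(1+\delta)>1$ calculation in the proof of Theorem~\ref{thm:wang}.
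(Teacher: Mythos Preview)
Your reduction to the system $A'=A_{R',C'}$ via Proposition~\ref{prop:redfeasSet} and the plan to invoke Lemma~\ref{lem:Hassibi-4-2} with $\tilde r_{0}=|C'|$ is exactly the route the paper has in mind: the text introduces Theorem~\ref{thm:SPCS1} as the application of Theorem~\ref{thm:Hassibi-4-1} and Lemma~\ref{lem:Hassibi-4-2} to the reduced system, and offers no separate proof. So on the level of strategy you and the paper coincide.

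The gap you isolate is genuine and is not closed by the paper either. The hypothesis~\eqref{eq:Hassibi-condition} controls only subsets $Y$ with $|Y|\le k$; since $X\subseteq C'$ and $|C'|$ can be as large as $|\calN(X)|/(\delta\ell)\le k/\delta$, the Hall condition on subsets $Y\subseteq C'$ with $k<|Y|\le |C'|$ is simply not supplied by the stated assumption. Your handling of the two endpoints ($|Y|\le k$ via $Y\subseteq\calN(\calN(Y))\setminus\calN(\calN(Y)^{c})$, and $Y=C'$ via $\calN(C')=R'$) is correct, but the closing paragraph does not constitute an argument: ``constant left degree $\ell$'' plus ``$|R'|\ge\delta\ell|C'|$'' does not by itself propagate Hall's condition across the middle range, and the analogy to the $\delta(1+\delta)>1$ step in Theorem~\ref{thm:wang} is only suggestive. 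Note also that the alternative route---apply Lemma~\ref{lem:Hassibi-4-2} to the full matrix with $\tilde r_{0}=k$ and then Theorem~\ref{thm:Hassibi-4-1}---loses a factor $\ell$ and yields only $(k/\ell)$-sparse recovery, so it does not rescue the statement.

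In short: your approach matches the paper's, your bookkeeping is sound, and you have correctly flagged a gap that the paper leaves open. The paper's downstream use of Theorem~\ref{thm:SPCS1} (Proposition~\ref{prop:appl-Hassibi}) replaces the combinatorial condition by its expected value and treats the result in an average-case sense, which is where the informal passage from~\eqref{eq:Hassibi-condition} to full-rank reducibility is effectively absorbed; but as a worst-case statement about arbitrary bipartite $A$, the intermediate Hall range remains unaddressed.
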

The consequences of Theorems \ref{thm:SP} and \ref{thm:SPCS1} are investigated in Section \ref{sec:weak-equivalence} by working out critical values of the sparsity parameter $k$ for which the respective conditions are satisfied with high probability.

\section{Strong Equivalence}
\label{sec:strong}

In \cite{Petra2009} we tested the properties of the discrete tomography matrix
in focus against various conditions, like the null space property, the restricted isometry property, etc., and predicted an extremely poor worst case performance of such a measurement
system. In the 3D case we showed that the strong threshold on sparsity, that is the maximal
sparsity level $k_0$ for which recovery of \emph{all} $k$-sparse (positive) vectors,
$k\le k_0$, is guaranteed, is a constant, not depending on the undersampling ratio $d$.

\subsection{Unperturbed Systems}

Given an indexing of cells and rays, we can rewrite the 
projection matrix $A^D_d\in\R^{Dd^{D-1} \times d^D}$ from \eqref{eq:def-AdD}
in closed form as
\begin{equation}\label{eq:A_dD}
 A^D_d:=\begin{cases}\left( \begin{array}{c}
   I_d\otimes \eins_d^T\\
   \eins_d^T\otimes I_d \\
\end{array} \right) ,  & \text{if }D=2\ ,\\[5mm]
\left( \begin{array}{c}
   \eins_d^\T \otimes I_d \otimes I_d \\
    I_d\otimes \eins_d^\T \otimes I_d\\
    I_d\otimes I_d\otimes \eins_d^\T 
   \end{array} \right)  , & \text{if } D=3 \ .
\end{cases}
\end{equation}
Since for this matrices a sparse nullspace basis can be computed, we
can derive the maximal sparsity via the nullspace property, as shown
next.
\begin{proposition}\cite[Prop. 2.2, Prop. 3.2]{Petra2009}\label{prop:RankKernelA}  Let 
$D\in\{2,3\}$, $d\in \N$, $d\ge 3$ and $A^D_d$ from \eqref{eq:A_dD}. Define $B^D_d\in\R^{d^D\times (d-1)^D}$ as
\begin{equation}\label{eq:B_D}
    B^D_d:=\begin{cases}
      \left(\begin{array}{c} -\eins_{d-1}^T\\ I_{d-1}\end{array} \right)
       \otimes
       \left(\begin{array}{c} -\eins_{d-1}^T\\ I_{d-1}\end{array} \right)
  ,  & \text{if }D=2,\\[5mm]
   \left(\begin{array}{c} -\eins_{d-1}^\T \\ I_{d-1}\end{array} \right)
       \otimes
       \left(\begin{array}{c} -\eins_{d-1}^\T \\ I_{d-1}\end{array} \right)
       \otimes 
       \left(\begin{array}{c} -\eins_{d-1}^\T \\ I_{d-1}\end{array} \right), & \text{if } D=3
\ .
\end{cases}
      \end{equation}
Then the following statements hold
\begin{itemize}
\item[(a)] $A^D_d B^D_d=0$.
\item[(b)] Every column in $B^D_d$ has exactly $2^D$ nonzero ($2^{D-1}$ positive, 
$2^{D-1}$ negative) elements.
\item[(c)] $B^D_d$ is a full rank matrix and $\rank(B^D_d)=(d-1)^D$.
\item[(d)] $\ker(A^D_d)=span\{B^D_d\}$, i.e. the columns of $B^D_d$ provide a basis 
           for the null space of $A^D_d$.
\item[(e)] $\rank(A^D_d)=d^D-(d-1)^D$.
\item[(f)] $\sum_{i=1}^nv_i=0$ holds for all $v\in\ker(A^D_d)$.
\item[(g)] The Kruskal rang of $A^D_d$ is $2^D-1$, i.e.
$$\min_{\substack{v\in\ker(A^D_d)\\ v\ne 0}} \|v\|_0=2^D\ .$$
\item[(h)] Every nonzero nullspace vector has at least $2^{D-1}$ negative
entries. i.e.
$$\min_{\substack{v\in\ker(A^D_d)\\ v\ne 0}} |I^{-}(v)|=2^{D-1}\ .$$
\end{itemize}
\end{proposition}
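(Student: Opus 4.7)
The plan is to dispatch (a)--(c) by direct Kronecker-product calculations, reduce (d)--(e) to a dimension count, obtain (f) from a trivial row-sum identity, and prove (g)--(h) by a combinatorial induction on $D$.

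For (a) I would apply the mixed-product rule $(M\otimes N)(P\otimes Q)=(MP)\otimes(NQ)$ to each block row of $A_d^D B_d^D$. The decisive identity
\[
\eins_d^{\T}\bsm -\eins_{d-1}^{\T}\\ I_{d-1}\esm = -\eins_{d-1}^{\T}+\eins_{d-1}^{\T}=0,
\]
together with $I_d\bsm -\eins_{d-1}^{\T}\\ I_{d-1}\esm=\bsm -\eins_{d-1}^{\T}\\ I_{d-1}\esm$, forces each block row of $A_d^D B_d^D$ to carry at least one vanishing Kronecker factor and hence to vanish. For (b), each column of the elementary factor $\bsm -\eins_{d-1}^{\T}\\ I_{d-1}\esm$ has exactly one $-1$ and one $+1$; a Kronecker product of $D$ such columns therefore has exactly $2^D$ nonzero entries whose signs are products of $\pm 1$'s, splitting evenly into $2^{D-1}$ positive and $2^{D-1}$ negative. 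For (c), each elementary factor has full column rank $d-1$, so $\rank(B_d^D)=(d-1)^D$ by multiplicativity of rank under $\otimes$.

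For (d) and (e) I would independently compute $\rank(A_d^D)=d^D-(d-1)^D$ by exhibiting that many linearly independent rows of $A_d^D$ (via block elimination using the $I_d$-blocks in \eqref{eq:A_dD} to isolate pivots). Combining this with (a) and (c), rank--nullity forces $\ker(A_d^D)=\mathrm{span}\{B_d^D\}$. For (f), every column of $A_d^D$ carries exactly $D$ ones, so $\eins_m^{\T}A_d^D=D\eins_n^{\T}$, whence $A_d^D v=0$ yields $\eins_n^{\T}v=0$.

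The heart of the argument is (g)--(h), which I treat by induction on $D$. View $v\in\ker(A_d^D)$ as a $D$-tensor: every axial line (fixing all but one coordinate) sums to zero, so any axial line meeting $\mathrm{supp}(v)$ contains at least two nonzero entries, among them at least one positive and at least one negative. Slicing along the first coordinate, $v=(w_1,\dots,w_d)$ with $w_i\in\R^{d^{D-1}}$, any nonzero slice $w_i$ lies in $\ker(A_d^{D-1})$, since axial line sums of $v$ along axes $2,\dots,D$ coincide with line sums within $w_i$. The index set $I=\{i:w_i\neq 0\}$ satisfies $|I|\geq 2$ by the two-per-line property applied to first-axis lines. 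The inductive hypothesis yields $\|w_i\|_0\geq 2^{D-1}$ and $|I^{-}(w_i)|\geq 2^{D-2}$ for every $i\in I$, with base case $D=1$ given by $\ker(A_d^{1})=\{v:\eins^{\T}v=0\}$ (minimum support $2$, minimum number of negatives $1$). Summing over $i\in I$ produces $\|v\|_0\geq 2^D$ and $|I^{-}(v)|\geq 2^{D-1}$, and the matching upper bounds are supplied by the explicit null-space vectors produced in (b).

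The main obstacle is the rank lower bound $\rank(A_d^D)\geq d^D-(d-1)^D$ underpinning (d)--(e): it is not yielded by any of the other items and requires an explicit non-singular submatrix or a sharp pivot argument tailored to the block structure of \eqref{eq:A_dD}. By contrast, the induction for (g)--(h) is conceptually clean once one verifies that slicing along a fixed coordinate preserves membership in the lower-dimensional null space, which is immediate from the $I_d\otimes\eins_d^\T\otimes\cdots$ block structure of $A_d^D$.
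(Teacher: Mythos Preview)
The paper itself does not supply a proof of this proposition; it is quoted from \cite{Petra2009} and stated without argument, so there is no proof here to compare against. Your outline is correct and would constitute a complete proof.

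One remark on what you flag as the ``main obstacle,'' the rank lower bound $\rank(A_d^D)\geq d^D-(d-1)^D$ needed for (d)--(e). Your proposed route via exhibiting independent rows of $A_d^D$ works but is more laborious than necessary. A cleaner path is to bound $\dim\ker(A_d^D)$ from above directly: since every axial line of a kernel vector $v$ sums to zero, the entries $v_{i_1,\dots,i_D}$ with all $i_j\geq 2$ determine the remaining entries (those with at least one index equal to $1$) uniquely by successive line completions. Hence $\dim\ker(A_d^D)\leq (d-1)^D$, and together with (a) and (c) this forces equality, yielding (d) and (e) immediately. In fact the columns of $B_d^D$ are precisely the kernel vectors that place a single $1$ at one of the $(d-1)^D$ ``free'' positions and fill in the boundary by this rule, so this observation is already implicit in the construction of $B_d^D$.

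Your inductive argument for (g)--(h) via slicing is clean and correct; the base case $D=1$ with $A_d^1=\eins_d^{\T}$ is a natural extension of the definition even though the proposition is only stated for $D\in\{2,3\}$.
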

Thus, (g) and (h) imply
\begin{corollary}\label{cor:strong} For all $d\in \N$, $d\ge 3$, every 
$\left(2^{D-1}-1\right)$-sparse vector $x^{\ast}$ is the unique sparsest solution of $A^D_d x =
A^D_d x^{\ast}$. Moreover, for every $\left(2^{D-1}-1\right)$-sparse positive vector $x^*$ 
$\{x \colon A^D_d x = A^D_d x^*\}$ is a singleton.
\end{corollary}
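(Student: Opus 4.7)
The plan is to read off both halves of the corollary directly from parts (g) and (h) of Proposition \ref{prop:RankKernelA}; the statement is essentially the classical spark/null-space uniqueness principle specialised to the kernel information already established.

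For the first assertion (uniqueness among sparsest solutions) I would apply the standard spark argument. Given any other solution $\tilde x$ of $A^D_d x = A^D_d x^\ast$ with $\|\tilde x\|_0 \le \|x^\ast\|_0 \le 2^{D-1}-1$, form the difference $v := x^\ast - \tilde x \in \ker(A^D_d)$ and note
\[
 \|v\|_0 \le \|x^\ast\|_0 + \|\tilde x\|_0 \le 2\bigl(2^{D-1}-1\bigr) = 2^D - 2 < 2^D.
\]
Since Proposition \ref{prop:RankKernelA}(g) forces every nonzero element of $\ker(A^D_d)$ to have $\ell_0$-norm at least $2^D$, the only possibility is $v = 0$, i.e.\ $\tilde x = x^\ast$.

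For the second assertion (uniqueness of the nonnegative solution) I would invoke Theorem \ref{thm:AllPOS}, using the equivalence (a) $\gdw$ (c) with $k = 2^{D-1}-1$. That equivalence requires every nonzero $v \in \ker(A^D_d)$ to have at least $2^{D-1}$ negative and at least $2^{D-1}$ positive entries. Proposition \ref{prop:RankKernelA}(h) supplies $|I^{-}(v)| \ge 2^{D-1}$ directly, and since $\ker(A^D_d)$ is a linear subspace I can apply (h) to $-v$ to obtain $|I^{+}(v)| = |I^{-}(-v)| \ge 2^{D-1}$ as well. Hence condition (c) of Theorem \ref{thm:AllPOS} is met, and the nonnegative solution set reduces to $\{x^\ast\}$.

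I do not expect any genuine obstacle: the corollary is a clean harvest of the quantitative kernel structure already in place. The only point warranting a moment of care is the factor of two in the spark step, which is exactly what aligns the sparsity threshold $2^{D-1}-1$ across both halves and reconciles the $\ell_0$-version (anchored on $\min\|v\|_0 = 2^D$) with the sign-pattern version (anchored on $\min|I^{-}(v)| = 2^{D-1}$).
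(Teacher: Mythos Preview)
Your proposal is correct and follows exactly the route the paper indicates: the paper simply states ``Thus, (g) and (h) imply'' the corollary, and you have filled in precisely the intended details---the spark argument from (g) for the first assertion, and the null-space sign condition from (h) combined with Theorem~\ref{thm:AllPOS}(c) for the second. The only cosmetic point is that the statement as printed omits the constraint $x\ge 0$ in the second solution set, but your reading (and proof) of it as the nonnegative solution set is clearly what is meant.
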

This bound is tight, since we can construct two $2^{D-1}$-sparse solutions $x^1$
and $x^2$ such that $A^D_d x^1=A^D_d x^2$, compare Fig. \ref{fig:NonUnique} for the
3D case.
However, when D=3, not every 8-column combination, or more, in $A^3_d$ is linearly dependent.
In fact, only a limited number of $k$-column combinations
can be dependent without violating $\rank(A^3_d)=3d^2-3d+1$.
It turns out that this number is tiny for smaller $k$ when compared to 
$\binom{n}{k}$. As $k$ increases this number also grows
and equals 1 only when $k>\rank(A^3_d)$.
Likewise, not every 4-sparse binary vector is nonunique. Due to the simple geometry 
of the problem it is not difficult to count the ''bad'' 4-sparse configurations in 3D.
Since they are always located in 4 out of 8 corners of a cuboid in the $d^3$ cube, compare Fig. \ref{fig:NonUnique} left, and there are only two possibilities to choose them, the probability that a 4-sparse binary vector is unique, equals
\begin{equation*}
1-\frac{2 \binom{d}{2}^3}{\binom{d^3}{4}}=
1-\frac{ 6 (d-1)^2 }{(d^2+ d+1)(d^3-2)(d^3-3)}=
1-\mathcal{O}(d^{-6})\xrightarrow{d \to \infty} 1\ .
\end{equation*}

\begin{figure}
\begin{tabular}{c c}
\includegraphics[clip,width=0.20\textwidth]{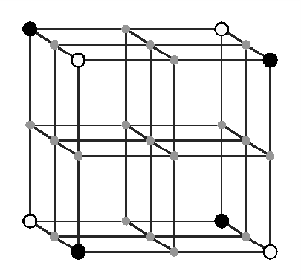}&
\includegraphics[clip,width=0.30\textwidth]{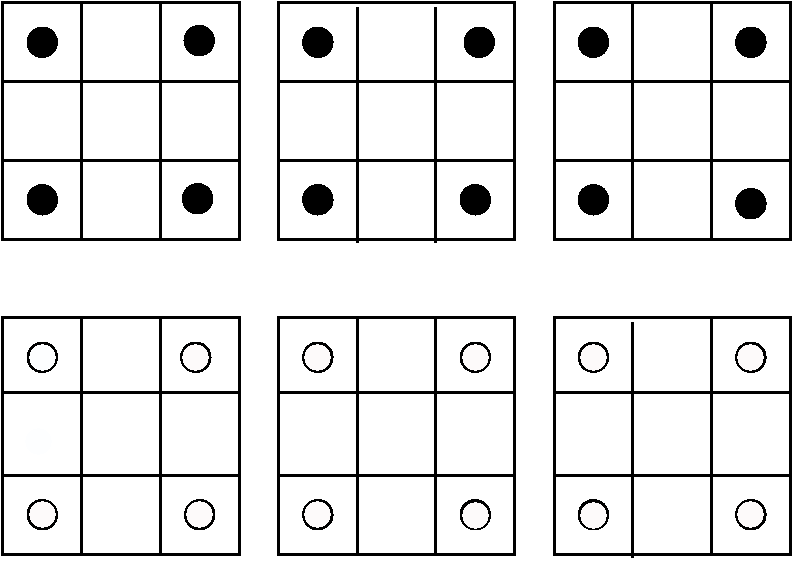}
\end{tabular}
\caption{ Two different {\em non unique} $4$-sparse ''particle'' distributions
in a $3\times 3\times 3$ volume. 
Both configurations (represented by black and white dots) yield 
identical projections in all three directions.  Such nonunique configurations correspond to
positive or negative entries in an $8$-sparse nullspace vector of $A^3_d$,
compare to Prop.  \ref{prop:RankKernelA}.}
\label{fig:NonUnique}
\end{figure}
\begin{figure}
\begin{tabular}{ccc}
\includegraphics[height=0.23\textheight]{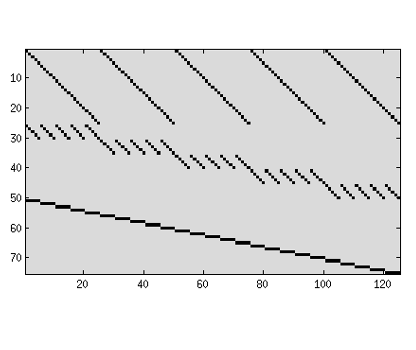} &
\includegraphics[height=0.23\textheight]{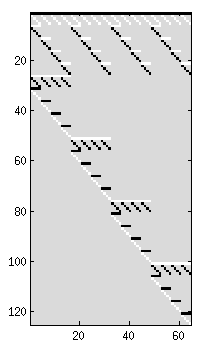}&
\includegraphics[height=0.23\textheight]{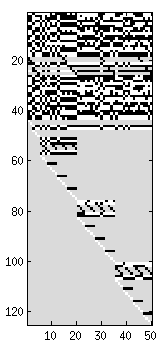}
\end{tabular}
\caption{{\bf Left:} The 3D projection matrix $A^3_5$ and,  {\bf middle,} a sparse basis which spans its nullspace. {\bf Right:} If we allow a small perturbation of the nonzero entries of $A^D_d$, all corresponding nullspace vectors of the perturbed 
matrix will be less sparse and lie in a $d^{D-1}(d-D)$-dimensional subspace, compared to
$(d-1)^D$ in the unperturbed case.}
\label{fig_nullspace}
\end{figure}

\subsection{Perturbed Systems}
The weak performance of $A^D_d$ rests upon its small Kruskal rank.
In order to increase the maximal number $k$ of columns such
that all $k$ (or less) column combinations are linearly independent we 
perturb the nonzero entries of the original matrix $A^D_d$.
Figure \ref{fig_nullspace}, right, indicates that perturbation
leads to less sparse nullspace vectors. 
If we could estimate the Kruskal rank $\tilde r_0$ of the perturbed system 
we could apply Thm. \ref{thm:Hassibi-4-1} and obtain a lower bound on the sparsity
yielding strong recovery \emph{for all} $\lceil \tilde r_{0}/\ell - 1 \rceil$-sparse vectors.
However, determining $\tilde r_0$ for the perturbed matrix seems impossible.
We believe however that it increases with $d$, in contrast to the constant $2^D-1$ in case of
unperturbed systems. 
Luckily, it will turn out in Section \ref{sec:recovery-unperturbed} that the weak recovery threshold for unperturbed systems will give a \emph{lower bound} on the strong recovery threshold for
perturbed matrices, since reduced systems will be strictly overdetermined and guaranteed to have full rank.

\section{Weak Recovery}\label{sec:weak-equivalence}
In this section, we consider the recovery properties of the 3D setup depicted in Fig.~\ref{fig_1} and establish conditions for weak recovery, that is conditions for unique recovery that holds \emph{on average with high probability}. We clearly point out that our conditions do \emph{not} guarantee unique recovery in \emph{each} concrete problem instance. 
\begin{remark} \label{rem:probability}
In what follows, the phrase \textbf{with high probability} refers to values of the sparsity parameter $k$ for which random supports $|\supp(b)|$ concentrate around the crucial expected value $N_{R}$ according to Prop.~\ref{prop:NR0-deviation}, thus yielding a desired threshold effect.
\end{remark}

We first inspect in Section \ref{sec:reduced-system} the effect of sparsity on the expected dimensions of a reduced system of linear equations, along with its equivalence to the original system. Subsequently, we establish the aforementioned conditions 
based on Theorems \ref{thm:SP} and \ref{thm:SPCS1}, and on the \emph{expected} quantities involved in the corresponding conditions.

In particular, we establish such uniqueness conditions for reduced underdetermined systems of dimension $m/n > (\sqrt{5}-1)/2 \approx 0.618$. Our results are in excellent agreement with numerical experiments discussed in Section \ref{sec:experiments}.

\subsection{Reduced System}\label{sec:reduced-system}
We formalize the system reduction described in 
Eqn.~\eqref{eq:reduced-dimensions}. Besides checking its equivalence to the unreduced system, we compute the expected reduced dimensions together with a deviation bound. Additionally, we determine critical values of the sparsity parameter $k$ that lead to overdetermined reduced systems.

Recall from Section \ref{sec:setup} that we regard a given measurement matrix $A$ also as adjacency matrix of a bipartite graph $G = (C,R;E)$.

\subsubsection{Definition and Equivalence}
\begin{definition}
The \textbf{reduced system} corresponding to a given non-negative vector $b$,
\begin{equation} \label{eq:red-system}
 A_{red} x = b_{red},\qquad A_{red} \in 
 \R_{+}^{m_{red} \times n_{red}},
\end{equation}
results from $A, b$ by choosing the subsets of rows and columns
\begin{equation} \label{eq:def-RbCb}
 R_{b} := \supp(b),\qquad
 C_{b} := \mc{N}(R_{b}) \setminus \mc{N}(R_{b}^{c})
\end{equation}
with 
\begin{equation} \label{def:mn-red}
 m_{red} := |R_{b}|,\qquad
 n_{red} := |C_{b}|.
\end{equation}
\end{definition}
Note that for a vector $x$ and the bipartite graph induced by the measurement matrix $A$, we have the correspondence (cf.~\eqref{eq:reduced-dimensions})
\[
X = \supp(x),\qquad
R_{b} = \mc{N}(X),\qquad
C_{b} = \mc{N}(\mc{N}(X)) \setminus \mc{N}(\mc{N}(X)^{c}).
\]
We further define
\begin{equation}\label{def:feasSet}
\calS^+:=\{x \colon Ax=b, x\ge 0\}
\end{equation}
and
\begin{equation}\label{def:redfeasSet}
\calS_{red}^+:=\{x \colon A_{R_b C_b}x=b_{R_b}, x\ge 0\}\ .
\end{equation}
The following proposition asserts that solving the reduced system 
\eqref{eq:red-system} will always recover the support of the solution to the original system $A x  = b$.
\begin{proposition}\label{prop:redfeasSet}
Let $A\in \R^{m\times n}$ and $b\in\R^m$ have nonnegative entries only, and let
$\calS^+$ and $\calS_{red}^+$ be defined by \eqref{def:feasSet} and
\eqref{def:redfeasSet}, respectively. Then
\begin{equation}\label{eq:feasSet}
  \calS^+=\{x\in \R^n \colon x_{(C_b)^c}=0\;\text{ and }\; x_{C_b}\in
  \calS_{red}^+\}.
\end{equation}
\end{proposition}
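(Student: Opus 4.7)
The plan is to establish the set equality by proving two inclusions separately. The forward inclusion $\calS^+\subseteq\{x : x_{(C_b)^c}=0,\ x_{C_b}\in\calS_{red}^+\}$ is where non-negativity of $A$ and $x$ does the real work; the reverse inclusion will be a mechanical verification.

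\textbf{Forward inclusion.} I would take $x\in\calS^+$ and first establish $x_j=0$ for every $j\in(C_b)^c$. Expanding the complement gives $(C_b)^c=\calN(R_b)^c\cup\calN(R_b^c)$, so I must handle both pieces. For $j\in\calN(R_b^c)$, the key is the standard non-negativity trick: pick some $r\in R_b^c$ adjacent to $j$; since $b_r=0$, the row equation $\sum_i A_{ri}x_i=0$ together with $A\geq0$ and $x\geq0$ forces each summand $A_{ri}x_i$ to vanish, and in particular $A_{rj}x_j=0$, so $x_j=0$. For $j\in\calN(R_b)^c\setminus\calN(R_b^c)$, the column of $A$ associated with $j$ would have to be identically zero; this does not occur in the tomographic setting since every column of $A_d^D$ has exactly $\ell$ non-zero entries, so this piece is empty. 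With $x_{(C_b)^c}=0$ in hand, restricting $Ax=b$ to rows $r\in R_b$ yields $b_r=\sum_{j\in C_b}A_{rj}x_j=(A_{R_b C_b}x_{C_b})_r$, so $x_{C_b}\in\calS_{red}^+$.

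\textbf{Reverse inclusion.} Conversely, suppose $x\geq0$ satisfies $x_{(C_b)^c}=0$ and $A_{R_b C_b}x_{C_b}=b_{R_b}$. Only $Ax=b$ remains to be checked. For $r\in R_b$ the previous calculation runs in reverse: $(Ax)_r=\sum_{j\in C_b}A_{rj}x_j=b_r$. For $r\in R_b^c$, the defining property of $C_b$ takes over: no $j\in C_b$ lies in $\calN(R_b^c)$, so $A_{rj}=0$ for every $r\in R_b^c$ and every $j\in C_b$, and therefore $(Ax)_r=0=b_r$. Thus $Ax=b$ and $x\in\calS^+$.

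\textbf{Main subtlety.} There is no deep obstacle here; the proposition is essentially a bookkeeping exercise built on two observations. The first is the passage from the aggregate identity $\sum_i A_{ri}x_i=0$ to the term-wise conclusion $A_{ri}x_i=0$ for all $i$, which relies decisively on the sign constraints $A,x\geq0$ and would fail for signed vectors. The second is the cover $(C_b)^c=\calN(R_b)^c\cup\calN(R_b^c)$, which makes transparent that every column outside $C_b$ is either driven to zero by the first observation or is disconnected from the surviving rows $R_b$, so that no information is lost when passing to the reduced system.
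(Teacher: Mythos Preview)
Your proof is correct and follows essentially the same route as the paper's: both establish the two inclusions by splitting the row index into $R_b$ and $R_b^c$ and using nonnegativity of $A$ and $x$ to force $x_{(C_b)^c}=0$. You do the inclusions in the opposite order and are in fact slightly more careful than the paper, since you explicitly decompose $(C_b)^c=\calN(R_b)^c\cup\calN(R_b^c)$ and note that the piece $\calN(R_b)^c\setminus\calN(R_b^c)$ is empty in the tomographic setting (columns with no nonzero entries), a point the paper passes over silently.
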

\begin{proof}
Let $S:=\{x\in \R^n \colon x_{(C_b)^c}=0\text{ and } x_{C_b}\in \calS^+_{red}\}$. We first show $S\subseteq \calS^+$. Let $x\in S$. From this
$x\ge 0$ follows directly. We thus just have to show $\sum_{j=1}^n
a_{ij}x_j=b_i, \forall i\in [n]$. Indeed, for
\begin{equation*}
i\in R_b :\quad
\sum_{j=1}^n a_{ij}x_j=
\sum_{j\in C_b}\underbrace{a_{ij}x_j}_{=b_i}
+\sum_{j\in (C_b)^c}a_{ij}\underbrace{x_j}_{=0}=b_i\ ,
\end{equation*}
whereas for
\begin{equation*}
i\in (R_b)^c :\quad
\sum_{j=1}^n a_{ij}x_j=
\sum_{j\in C_b}\underbrace{a_{ij}}_{=0}x_j
+\sum_{j\in (C_b)^c}\underbrace{a_{ij}}_{>a_{ij}}
\underbrace{x_j}_{=0}=0=b_i\ .
\end{equation*}
Now let $x\in\calS^+$ and consider any $i\in (R_b)^c$. Then
\begin{equation}\label{eq:x_(C_b)^c==0}
0=b_i=\sum_{j=1}^n a_{ij}x_j=\sum_{j\in C_b}\underbrace{a_{ij}}_{=0}x_j +\sum_{j\in
(C_b)^c}\underbrace{a_{ij}}_{>a_{ij}}x_j
\end{equation}
holds. Since $x\ge 0$, we obtain from \eqref{eq:x_(C_b)^c==0} that
$x_j=0,\forall j\in (C_b)^c$. To show that $A_{R_b C_b}x_{C_b}=b_{R_b}$, consider
\begin{equation*}
i\in R_b:\quad \sum_{j\in C_b}a_{ij}x_j =\sum_{j\in C_b}a_{ij}x_j+ \sum_{j\in
(C_b)^c}a_{ij}\underbrace{x_j}_{=0}=\sum_{j=1}^n a_{ij}x_j=b_i\ .
\end{equation*}
Hence, $x_{(C_b)^c}=0$ and $x_{C_b }\in\calS^+_{red}$. Thus $x\in S$.
\end{proof}

In the following two sections, we compute the expected values of the reduced system dimension \eqref{def:mn-red}.

\subsubsection{Expected Number of Non-Zero Measurements}

We consider the uniform random assignment of $k$ particles to the $n = |C|$ cells $c \in C$. A single cell may be occupied by more than a single particle. This corresponds to the physical situation that real particles are very small relative to the discretization depicted by Figure \ref{fig_1}. The imaging optics enlarges the appearance of particles, and the action of physical projection rays is adequately represented by linear superposition.

This scenario gives rise to a random vector $x \in \R_{k,+}^{n}$ with support $|\supp(x)| \leq k$. It generates a vector 
\begin{equation}
b = A_{d}^{D} x \in \R_{+}^{m}
\end{equation}
of measurements. We are interested in the expected size of the support of $b$, 
\begin{equation} \label{eq:def-NR}
N_{R} := \EE[|\supp(b)|],\qquad
N_{R}^{0} := m - N_{R},
\end{equation}
that equals the number of projection rays $r \in R$ with non-vanishing measurements $b_{r} \neq 0$. We denote the event $b_{r} = 0$ by the binary random variable\footnote{We economize notation here by re-using the symbol $X$, a random indicator vector indexed by rays (right nodes) $r \in R$. Due to the context, there should be no danger of confusion with $X = \supp(x)$ denoting random subsets of left nodes used in other sections.} $X_{r}=1$, i.e.~$X_{r}=0$ corresponds to the event $b_{r} > 0$ that at least a single particle meets ray $r$.

The probability that a single $c$ is met by ray $r$ is 
\begin{equation} \label{eq:def-qd}
q_{d} := \frac{d}{|C|} = \frac{d}{n} = \frac{1}{d^{D-1}}.
\end{equation}
For $k$ particles, the probability that $0 \leq i \leq k$ particles meet projection ray $r$ is
\begin{equation} \label{eq:def-pd}
\Pr(b_{r}=i) = \binom{k}{i} q_{d}^{i} p_{d}^{k-i},\qquad 
p_{d} := 1-q_{d}.
\end{equation}
Consequently, we have
\begin{subequations} \label{eq:E-Xr}
\begin{align}
\Pr[X_{r}=1] &= \EE[X_{r}] = p_{d}^{k}, \\
\Pr[X_{r}=0] &= \sum_{i=1}^{k} \binom{k}{i} q_{d}^{i} p_{d}^{k-i}
= 1 - p_{d}^{k}.
\end{align}
\end{subequations}
\begin{lemma} \label{eq:lem-NR}
 The expected number of non-zero measurements defined by \eqref{eq:def-NR} is
\begin{equation} \label{eq:NR0-values}
\begin{aligned}
 N_{R} &= N_{R}(k) = |R| (1-p_{d}^{k}) 
 = D d^{D-1} \bigg(1-\Big(1-\frac{1}{d^{D-1}}\Big)^{k}\bigg), \\
 N_{R}^{0} &= N_{R}^{0}(k) = |R|-N_{R} = |R| p_{d}^{k}
 = D d^{D-1} \Big(1-\frac{1}{d^{D-1}}\Big)^{k}.
\end{aligned}
\end{equation}
\end{lemma}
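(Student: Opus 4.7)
The proof plan is essentially a one-line application of linearity of expectation, since the heavy lifting has already been carried out in the paragraph preceding the lemma statement.

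First, I would observe that because the indicator $X_r$ equals $1$ exactly when $b_r = 0$ and equals $0$ exactly when $b_r > 0$, the size of the support of $b$ can be written as
\[
|\supp(b)| \;=\; \sum_{r \in R} (1 - X_r).
\]
Taking expectations and invoking linearity together with the value $\EE[X_r] = p_d^k$ recorded in \eqref{eq:E-Xr} gives
\[
N_R \;=\; \sum_{r \in R} \bigl(1 - \EE[X_r]\bigr) \;=\; |R|\,(1 - p_d^k),
\qquad
N_R^0 \;=\; |R| - N_R \;=\; |R|\, p_d^k.
\]
Substituting $|R| = D d^{D-1}$ from the setup in Section~\ref{sec:setup} and $p_d = 1 - q_d = 1 - 1/d^{D-1}$ from \eqref{eq:def-qd} produces the two closed-form expressions claimed in \eqref{eq:NR0-values}.

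The only point that deserves a brief sanity check is the identity $\EE[X_r] = p_d^k$ itself. This uses the modelling assumption that the $k$ particles are placed independently and uniformly over the $n = d^D$ cells (with possible repetition, as explicitly allowed in the paragraph introducing this scenario). A fixed ray $r$ intersects exactly $d$ cells, so a single particle misses $r$ with probability $1 - d/n = 1 - 1/d^{D-1} = p_d$, and by independence all $k$ particles miss $r$ simultaneously with probability $p_d^k$; this is already derived in \eqref{eq:def-pd}--\eqref{eq:E-Xr}.

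Since the probabilistic model, the definition of $X_r$, the value of $q_d$, and the expectation $\EE[X_r]$ have all been established in the preceding paragraphs, there is no real obstacle: the proof reduces to collecting these pieces via linearity of expectation. I would therefore keep the write-up to a few lines, merely making the two substitutions above explicit.
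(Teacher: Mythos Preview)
Your proposal is correct and matches the paper's own proof essentially line for line: the paper also writes $N_R = \EE\big[\sum_{r\in R}(1-X_r)\big] = |R|(1-p_d^k)$ by linearity of expectation and then reads off $N_R^0$. Your added sanity check of $\EE[X_r]=p_d^k$ is fine but, as you note, is already established in \eqref{eq:E-Xr}.
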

\begin{proof}
Due to the linearity of expectation, summing over all rays gives
\[
N_{R} = \EE\Big[\sum_{r \in R} (1-X_{r}) \Big]
= |R| (1-p_{d}^{k}).
\]
\end{proof}
\begin{remark} \label{rem:m-red}
Note that $N_{R}$ specifies the expected value of $m_{red}$ in \eqref{def:mn-red} induced by random $k$-sparse vectors $x \in \R_{k,+}^{n}$. See Figure \ref{fig:NR-Plot} for an illustration.
\end{remark}
\begin{figure}
\centerline{
\includegraphics[width=0.5\textwidth]{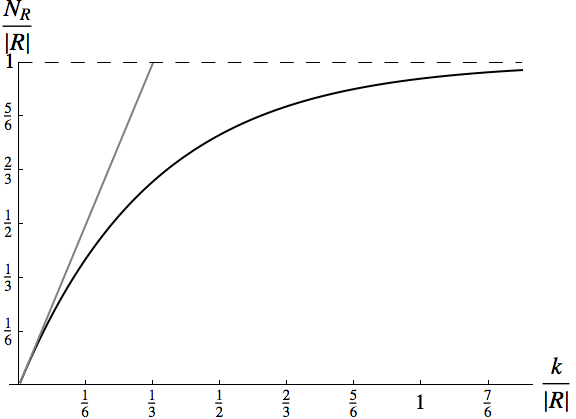}
}
\caption{
The expected number $N_{R}$ of non-zero measurements \eqref{eq:E-Xr}. For highly sparse scenarios (small $k$), the expected support \eqref{eq:def-NR} of the measurement vector $|\supp(b)| \approx 3 k$. For large values of $k$, this rate decreases due to the multiple incidence of cells and projection rays.
}
\label{fig:NR-Plot}
\end{figure}

\subsubsection*{Bounding the Deviation of $N_{R}^{0}$}
We are interested in how sharply the random number $X = \sum_{r \in R} X_{r}$ of zero measurements peaks around its expected value $N_{R}^{0} = \EE[X]$ given by \eqref{eq:NR0-values}. We derive next a corresponding tail bound by regarding a sequence of $k$ randomly located cells and by bounding the difference of subsequent conditional expected values of the random variable $X$. Theorem \ref{thm:Azuma} then provides a bound for the deviation $|X-\EE[X]|$.

Let the set of rays $R$ represent the elementary events corresponding to the observations $X_{r}=1$ or $X_{r}=0$ for each ray $r \in R$, i.e.~ray $r$ corresponds to a zero measurement or not. 

Let $\mc{F}_{i} \subset 2^{R},\, i=0,1,2,\dotsc$, denote the $\sigma$-field generated by the collection of subsets of $R$ that correspond to all possible events after having observed $i$ randomly selected cells. We set $\mc{F}_{0} = \{\emptyset,R\}$. Because observing cell $i+1$ just further partitions the current state based on the previously observed $i$ cells by possibly removing some ray (or rays) from the set of zero measurements, we have a nested sequence (filtration) $\mc{F}_{0} \subseteq \mc{F}_{1} \subseteq \dotsb \subseteq \mc{F}_{k}$ of the set $2^{R}$ of all subsets of $R$.

Based on this, for a fixed value of the sparsity parameter $k$, we define the sequence of random variables
\begin{equation}
 Y_{i} = \EE[X|\mc{F}_{i}],\quad i=0,1,\dotsc,k,
\end{equation}
where $Y_{i},\,i=0,1,\dotsc,k-1$, are the random variables specifying the expected number of zero measurements after having observed $k$ randomly selected cells, conditioned on the subset of events $\mc{F}_{i}$ determined by the observation of $i$ randomly selected cells. Consequently, $Y_{0}=\EE[X]=N_{R}^{0}$ due to the absence of any information, and $Y_{k} = X$ is just the observed number of zero measurements. 
The sequence $(Y_{i})_{i=0,\dotsc,k}$ is a martingale by construction satisfying $\EE[Y_{i+1}|\mc{F}_{i}]=Y_{i}$, that is condition \eqref{eq:condition-martingale}.
\begin{proposition} \label{prop:NR0-deviation}
 Let $N_{R}^{0}=\EE[X]$ be the expected number of zero measurements for a given sparsity parameter $k$, given by \eqref{eq:NR0-values}. Then, for any $\delta > 0$, 
\begin{equation} \label{eq:NR0-deviation}
\begin{aligned}
 \Pr\big(|X-N_{R}^{0}| \geq \delta\big) \;&\leq\;
 2 \exp\bigg(
 -\frac{1-p_{d}^{2}}{(1-p_{d}^{2k})}
 \;\frac{\delta^{2}}{2 D^{2}}
 \bigg) \\
 & \nearrow\;
 2 \exp\Big(-\frac{\delta^{2}}{2 D^{2} k}\Big) 
 \qquad\text{if}\quad d \to \infty.
\end{aligned}
\end{equation}
\end{proposition}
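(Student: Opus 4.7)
\textbf{Proof plan for Proposition \ref{prop:NR0-deviation}.} The plan is to apply Azuma's inequality (Theorem \ref{thm:Azuma}) to the Doob martingale $Y_i = \EE[X \mid \mc{F}_i]$ already constructed in the preceding paragraph, with an explicit geometric sequence of increments $c_i$. The first step is to write $Y_i$ in closed form. After observing the cells occupied by the first $i$ particles, a ray $r$ is still ``alive'' (i.e.\ can yet be a zero measurement) iff none of those cells lies in $\calN(r)$; call $U_i$ the number of alive rays. Each alive ray contributes to $X$ independently with probability $p_d^{k-i}$ (the chance that none of the remaining $k-i$ particles hits it), so
\begin{equation*}
Y_i = p_d^{k-i} U_i, \qquad i = 0, 1, \dotsc, k,
\end{equation*}
which checks out at the endpoints: $Y_0 = p_d^k |R| = N_R^0$ and $Y_k = U_k = X$.

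Next I would compute the martingale differences. Placing the $(i+1)$-st particle in cell $c_{i+1}$ kills exactly $\Delta_{i+1}$ alive rays, where $\Delta_{i+1} = |\{r : \text{$r$ is alive after step $i$ and } c_{i+1} \in \calN(r)\}|$. Because only $\ell = D$ rays meet any given cell, $0 \leq \Delta_{i+1} \leq D$. Substituting $U_{i+1} = U_i - \Delta_{i+1}$ into $Y_{i+1} = p_d^{k-i-1} U_{i+1}$ gives
\begin{equation*}
Y_{i+1} - Y_i = p_d^{k-i-1}\bigl(U_i(1-p_d) - \Delta_{i+1}\bigr).
\end{equation*}
Since each alive ray is killed independently with probability $q_d = 1-p_d$, the conditional expectation $\EE[\Delta_{i+1} \mid \mc{F}_i] = (1-p_d) U_i$, confirming that $(Y_i)$ is indeed a martingale. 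For the Azuma bound I observe that $U_i \leq |R| = D d^{D-1}$, hence $U_i(1-p_d) = U_i/d^{D-1} \leq D$; combined with $0 \leq \Delta_{i+1} \leq D$ this yields the sure bound
\begin{equation*}
|Y_{i+1} - Y_i| \leq D\, p_d^{k-i-1} =: c_{i+1}.
\end{equation*}

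With these $c_i$, Theorem \ref{thm:Azuma} gives $\Pr(|X - N_R^0| \geq \delta) \leq 2 \exp\bigl(-\delta^2/(2 \sum_{i=1}^{k} c_i^2)\bigr)$, and summing the geometric series
\begin{equation*}
\sum_{i=1}^{k} c_i^2 = D^2 \sum_{j=0}^{k-1} p_d^{2j} = D^2 \, \frac{1-p_d^{2k}}{1-p_d^{2}}
\end{equation*}
produces the first inequality of \eqref{eq:NR0-deviation}. Finally, the asymptotic claim follows from the Taylor expansions $1-p_d^2 = (1-p_d)(1+p_d) \sim 2/d^{D-1}$ and $1-p_d^{2k} \sim 2k/d^{D-1}$ as $d \to \infty$, whose ratio tends to $1/k$ monotonically from below (so the bound really is approached from above, matching the $\nearrow$ in the statement). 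The main obstacle is the sure bound on $|Y_{i+1} - Y_i|$: a naive bound based only on $\Delta_{i+1} \leq D$ gives $|Y_{i+1}-Y_i| \leq p_d^{k-i-1}(U_i(1-p_d)+D)$, which is too large; the geometric factor $p_d^{k-i-1}$ is preserved only by noticing that $U_i(1-p_d)$ is itself capped by $D$ because $U_i \leq D d^{D-1}$.
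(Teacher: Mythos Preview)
Your proposal is correct and follows essentially the same route as the paper's proof: both construct the Doob martingale $Y_i=\EE[X\mid\mc{F}_i]$, express it as $p_d^{\,k-i}$ times the number of rays still ``alive'' after $i$ placements, bound the $i$-th increment by $D\,p_d^{\,k-i}$ via $U_i(1-p_d)\le |R|q_d=D$ and $\Delta_{i+1}\le D$, sum the resulting geometric series, and apply Theorem~\ref{thm:Azuma}. Your single-formula presentation $Y_{i+1}-Y_i=p_d^{\,k-i-1}\bigl(U_i(1-p_d)-\Delta_{i+1}\bigr)$ merely collapses the paper's two-case analysis into one step, but the underlying estimate and constants are identical.
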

This result shows that for large problem sizes $d$ occurring in applications, concentration of observations of $N_{R}^{0}$ primarily depends on the sparsity parameter $k$. As a consequence, the bound enables suitable choices of $k = k(d)$ of the sparsity parameter depending on the problem size.

For example, typical values
\begin{equation} \label{eq:k-LaVision}
\begin{aligned}
k = \begin{cases} 
0.05 d & \text{in 2D}, \\
0.05 d^{2} & \text{in 3D},
\end{cases}
\end{aligned}
\end{equation}
chosen by engineers\footnote{Personal communication.} in applications according to a rule of thumb, result in
\begin{equation} \label{eq:deviation-concrete}
 \Pr\big(|X-N_{R}^{0}| \geq \delta\big) \;\leq\;
 \begin{cases}
 2 \exp\Big(-\frac{5}{2 d} \delta^{2} \Big) & \text{in 2D},\\
 2 \exp\Big(-\frac{10}{9 d^{2}} \delta^{2} \Big) & \text{in 3D}.
 \end{cases}
\end{equation}
For the 3D case \eqref{eq:k-LaVision}, the probability to observe deviations from $N_{R}^{0}$ larger than $1\%$ drops below $0.01$ for problem sizes $d \geq 77$, which is common in practice.

Thus, the bound \eqref{eq:NR0-deviation} is strong enough to indicate not only that \eqref{eq:k-LaVision} is a particular sensible choice, but also leads to more proper choices of $k$ for applications, which still give highly concentrated values of observations of $N_{R}^{0}$. This is the essential prerequisite for threshold effects of unique recovery from sparse measurements.
\begin{proof}[Proof (Proposition \ref{prop:NR0-deviation})]
 Let $R^{0}_{i-1} \subset R$ denote the subset of rays with zero measurements after the random selection of $i-1 < k$ cells. For the remaining $k-(i-1)$ trials, the probability that not any cell incident with some ray $r \in R^{0}_{i-1}$ will be selected, is 
\begin{equation}
 p_{d}^{k-(i-1)} = \EE[X_{r}|\mc{F}_{i-1}],
\end{equation}
with $p_{d}$ given by \eqref{eq:def-pd}. Consequently, by linearity, the expectation $Y_{i-1}$ of zero measurements given $|R^{0}_{i-1}|$ zero measurements after the selection of $i-1$ cells, is
\begin{equation}
 Y_{i-1} = \EE[X|\mc{F}_{i-1}]
 = \sum_{r \in R^{0}_{i-1}} p_{d}^{k-(i-1)}.
\end{equation}
Now suppose we observe the random selection of the $i$-th cell. We distinguish two possible cases.
\begin{enumerate}
\item
 Cell $i$ is not incident with any ray $r \in R^{0}_{i-1}$. Then the number of zero measurements remains the same, and
\begin{equation}
 Y_{i} = \sum_{r \in R^{0}_{i-1}} p_{d}^{k-i}.
\end{equation}
Furthermore,
\begin{equation} \label{eq:Azuma-estimate-1}
\begin{aligned}
Y_{i}-Y_{i-1} &= \sum_{r \in R^{0}_{i-1}} \big(
 p_{d}^{k-i} - p_{d}^{k-(i-1)} \big)
 = |R^{0}_{i-1}| p_{d}^{k-i} (1-p_{d}) \\
 &\leq (|R|-1) p_{d}^{k-i} q_{d}.
\end{aligned}
\end{equation}
\item
Cell $i$ is incident with $1, \dotsc,D$ rays contained in $R^{0}_{i-1}$. Let $R^{0}_{i}$ denote the set $R^{0}_{i-1}$ after removing these rays. Then
\[
 Y_{i} = \sum_{r \in R^{0}_{i}} p_{d}^{k-i}.
\]
Furthermore, since $R^{0}_{i} \subset R^{0}_{i-1}$ and
$|R^{0}_{i-1} \setminus R^{0}_{i}| \leq D$,
\begin{equation} \label{eq:Azuma-estimate-2}
\begin{aligned} 
Y_{i-1} - Y_{i} &=
\sum_{r \in R^{0}_{i-1} \setminus R^{0}_{i}} p_{d}^{k-(i-1)}
- \sum_{r \in R^{0}_{i}} \big(
p_{d}^{k-i} - p_{d}^{k-(i-1)} \big) \\
&\leq D p_{d}^{k-i+1} 
- \sum_{r \in R^{0}_{i}} 
p_{d}^{k-i} (1-p_{d}) \leq D p_{d}^{k-i+1}.
\end{aligned}
\end{equation}
\end{enumerate}
Comparing the bounds \eqref{eq:Azuma-estimate-1} and 
\eqref{eq:Azuma-estimate-2}, we have with $|R| q_{d} = D$,
\[
(|R|-1) q_{d} p_{d}^{k-i} = (D - q_{d}) p_{d}^{k-i}, \qquad\qquad
D p_{d} p_{d}^{k-i} = (D - D q_{d}) p_{d}^{k-i}.
\]
Thus, we take the larger bound \eqref{eq:Azuma-estimate-1}, drop the immaterial $-1$ in the first factor and compute
\[
\sum_{i=1}^{k} (D p_{d}^{(k-i)})^{2}
= D^{2} \frac{1 - p_{d}^{2 k}}{1-p_{d}^{2}}.
\]
Inserting $p_{d}$ from \eqref{eq:def-pd} and expanding in terms of $d^{-1}$ at $0$, we obtain
\begin{align*}
 \frac{1 - p_{d}^{2 k}}{1-p_{d}^{2}} = \begin{cases}
 k + (k-k^{2}) d^{-1} + \mc{O}(d^{-2}), & \text{in 2D} \\
 k + (k-k^{2}) d^{-2} + \mc{O}(d^{-4}), & \text{in 3D}
 \end{cases} \qquad\xrightarrow{d \to \infty} k.
\end{align*}
Applying Theorem \ref{thm:Azuma} completes the proof.
\end{proof}

\begin{figure}
\centerline{
\includegraphics[width=0.5\textwidth]{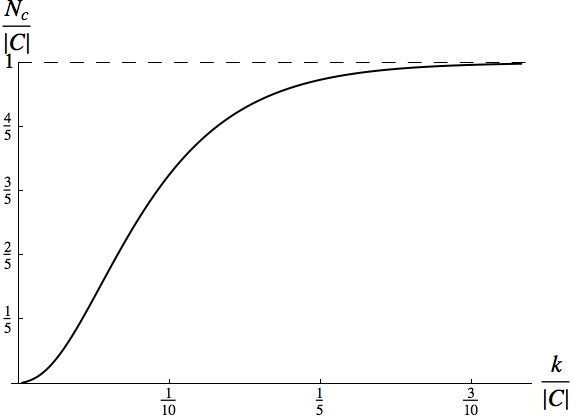}
}
\caption{
The expected number $N_{C} = \EE[|C_{b}|]$ of cells supporting observed measurement vectors $b$, given by \eqref{eq:NC-value}. Starting with rate $N_{C} \propto k$ for very small values of $k$, it quickly increases and exceeds $N_{R}$ (Fig.~\ref{fig:NR-Plot}), thus leading to underdetermined reduced systems \eqref{eq:red-system}.
}
\label{fig:NC-Plot}
\end{figure}

\subsubsection{Expected Number of Cells}

In the previous section, we computed the expected number of measurements $N_{R} = \EE[|\supp(b)|]$ induced by a random unknown $k$-sparse vector $x$ (Lemma \ref{eq:lem-NR}) along with a tail bound for $N_{R}^{0} = |R|-N_{R}$ (Prop.~\ref{prop:NR0-deviation}).

In the present section, we determine the expected number of cells corresponding to $N_{R}$, denoted by $N_{C}$. We confine ourselves to the practically more relevant 3D case.

As in the previous section, $X \in \{0,1\}^{|R|}$ denotes a random vector indicating subsets of projection rays. $X_{r}=1,\, r \in R$, corresponds to a zero observation along ray $r$. For a subset of rays $R_{b} \subset R$, we say that the corresponding subset of cells $C_{b}$ in \eqref{eq:def-RbCb} \textbf{supports} $R_{b}$.
\begin{proposition}\label{prop:NC}
 For a given value of the sparsity parameter $k$, the expected size of subsets of cells that support random subsets $R_{b} \subset R$ of observed non-zero measurements, is 
\begin{equation} \label{eq:NC-value}
 N_{C} = N_{C}(k) = d^{3} \bigg( 1 
 - 3\Big(1-\frac{1}{d^{2}}\Big)^{k} 
 + 3\Big(1-\frac{2 d-1}{d^{3}}\Big)^{k}
 - \Big(1-\frac{3 d-2}{d^{3}}\Big)^{k}
 \bigg).
\end{equation}
\end{proposition}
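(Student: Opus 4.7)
The plan is to write $N_C = \sum_{c\in C} \Pr(c\in C_b)$ by linearity of expectation over cells, then compute $\Pr(c\in C_b)$ for a single cell via inclusion–exclusion on the three rays through $c$.

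First I would identify the combinatorial condition for $c\in C_b$. In the 3D setup each cell lies on exactly $\ell = D = 3$ rays, one in each of the three coordinate directions; call them $r_1(c),r_2(c),r_3(c)$. Since $c \in C_b := \mathcal{N}(R_b)\setminus \mathcal{N}(R_b^c)$ requires that every ray incident to $c$ belongs to $R_b$ (otherwise $c\in\mathcal{N}(R_b^c)$), and since $R_b=\supp(b)$ consists exactly of those rays hit by at least one particle, the condition $c\in C_b$ is equivalent to the event
\[
E_c \;:=\; \{r_1(c),r_2(c),r_3(c) \text{ are each hit by at least one of the } k \text{ particles}\}.
\]
By symmetry of the uniform assignment of the $k$ particles to the $n = d^3$ cells (with replacement), $\Pr(E_c)$ does not depend on $c$, so $N_C = d^3\,\Pr(E_c)$.

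Next I would compute $\Pr(E_c)$ by inclusion–exclusion on the complementary events $A_i := \{r_i(c)\text{ is missed by every particle}\}$. For a union of some of these rays covering $t$ distinct cells, the probability that a single particle avoids all of them is $1 - t/d^3$, and by independence over the $k$ particles the probability that all $k$ particles avoid them is $(1-t/d^3)^k$. Thus I only need the cell counts of the unions:
\begin{itemize}
\item A single ray covers $d$ cells, so $\Pr(A_i) = (1-d/d^3)^k = (1-1/d^2)^k$.
\item Two rays through $c$ in different directions share only $c$, so they cover $2d-1$ cells and $\Pr(A_i\cap A_j) = (1-(2d-1)/d^3)^k$.
\item The three rays through $c$ pairwise meet only at $c$ and triply meet only at $c$, giving by inclusion–exclusion $3d - 3 + 1 = 3d-2$ cells, so $\Pr(A_1\cap A_2\cap A_3) = (1-(3d-2)/d^3)^k$.
\end{itemize}
Inclusion–exclusion then yields
\[
\Pr(E_c) \;=\; 1 - 3\Big(1-\tfrac{1}{d^2}\Big)^{k} + 3\Big(1-\tfrac{2d-1}{d^3}\Big)^{k} - \Big(1-\tfrac{3d-2}{d^3}\Big)^{k},
\]
and multiplying by $d^3$ gives the claimed formula.

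The only subtle point, and the step I would double-check, is the equivalence $\{c\in C_b\} \Leftrightarrow E_c$: the direction $E_c \Rightarrow c\in\mathcal{N}(R_b)$ is immediate, but one must also verify that $E_c$ excludes $c\in\mathcal{N}(R_b^c)$, which holds precisely because $c$ has exactly three neighbors and $E_c$ forces all of them into $R_b$. Everything else is a routine inclusion–exclusion together with the elementary cell counts of intersections of axis-aligned rays in the $d\times d\times d$ grid.
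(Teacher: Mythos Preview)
Your proposal is correct and follows essentially the same approach as the paper: both use linearity of expectation over cells and inclusion--exclusion on the three rays through a fixed cell, computing the same miss-probabilities $(1-1/d^2)^k$, $(1-(2d-1)/d^3)^k$, and $(1-(3d-2)/d^3)^k$. The paper phrases the inclusion--exclusion via the expansion of $\EE[(1-X_{r_1})(1-X_{r_2})(1-X_{r_3})]$, which is algebraically identical to your formulation via the complementary events $A_i$; if anything, your verification of the equivalence $\{c\in C_b\}\Leftrightarrow E_c$ is stated more carefully than in the paper.
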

\begin{proof}
We partition the set of rays $R = R_{1} \cup R_{2} \cup R_{3}$ according to the three projection images (Fig.~\ref{fig_1}) and associate with the cells $C$ the corresponding set of triples of projection rays
\[
R_{1,2,3} = \big\{ (r_{1}, r_{2}, r_{3}) \colon \cap_{i=1}^{3} r_{i} \neq \emptyset,\; r_{i} \in R_{i},\; i=1,2,3 \big\},
\]
with each triple intersecting in a single cell. Thus, we have $|R_{1,2,3}| = |C| = d^{3}$, and each cell $c_{ijk} = r_{i} \cap r_{j} \cap r_{k}$ belongs to the set $C_{b}$ supporting $R_{b}$ if 
$R_{b} \cap (r_{i} \cup r_{j} \cup r_{k}) \neq \emptyset$. In terms of random variables $X_{r}$ indicating zero-measurements by $X_{r}=1$, this means that $c_{ijk} \in C_{b}$ if not $X_{r_{i}}=X_{r_{j}}=X_{r_{k}}=1$. Thus,
\begin{align*}
 N_{C} &= \EE\Big[
 \sum_{R_{1,2,3}} 
 (1-X_{r_{1}})(1-X_{r_{2}})(1-X_{r_{3}}) \Big] \\
 &=  \sum_{R_{1,2,3}} \Big( 1 - 
  \big(\EE[X_{r_{1}}] + \EE[X_{r_{2}}] + \EE[X_{r_{3}}]\big)
 + \sum_{1 \leq i < j \leq 3} \EE[X_{r_{i}} X_{r_{j}}]
 -  \EE[X_{r_{1}} X_{r_{2}} X_{r_{3}}] \Big). 
\end{align*}
This expression takes into account the intersection of projection rays $r_{i}, r_{j}$ (inclusion-exclusion principle) in order not to overcount the number of supporting cells.

We have $\EE[X_{r_{i}}] = p_{d}^{k} = (1-d^{-2})^{k}$ by \eqref{eq:E-Xr} and \eqref{eq:def-pd}.
The event $X_{r_{i}} X_{r_{j}}=1$ means that both rays correspond to zero measurements, which happens with probability
\[
\Big(1 - \frac{|r_{i} \cup r_{j}|}{|C|}\Big)^{k}
= \Big(1 - \frac{2 d-1}{d^{3}}\Big)^{k}.
\]
We have three pairs of sets of rays from $R = R_{1} \cup R_{2} \cup R_{3}$, and each of the $d^{2}$ rays $r_{i} \in R_{i}$ intersects with $d$ rays $r_{j} \in R_{j}$. Finally, three intersecting rays correspond to zero measurements with probability
\[
\Big(1 - \frac{|r_{1} \cup r_{2} \cup r_{3}|}{|C|}\Big)^{k}
= \Big(1 - \frac{3 d-2}{d^{3}}\Big)^{k},
\]
for each of the $d^{3}$ cells $c \in C$.
\end{proof}
\begin{remark} \label{rem:n-red}
Note that $N_{C}$ specifies the expected value of $n_{red}$ in \eqref{def:mn-red} induced by random $k$-sparse vectors $x \in \R_{k,+}^{n}$. See Figure \ref{fig:NC-Plot} for an illustration.
\end{remark}
%

\subsubsection{Overdetermined Reduced Systems: Critical Sparsity $k$}

For small value of $k$, that is for highly sparse scenarios, the expected value $N_{R}(k) \approx 3 k$ grows faster than $N_{C}(k) \approx k$. Consequently, the \emph{expected} reduced system due to Definition \ref{eq:red-system} will be overdetermined. This holds up to a critical value $k \leq k_{crit}$ because for increasing values of $k$, it is more likely that several particles are incident with some projection ray, making $N_{C}$ increasing faster than $N_{R}$.
\begin{proposition} \label{prop:kcrit}
 For $k \leq k_{crit}$, the reduced system \eqref{eq:red-system} will be overdetermined with high probability, where $k_{crit}$ solves
\begin{equation} \label{eq:def-k-crit}
 N_{R}(k_{crit}) = N_{C}(k_{crit})
\end{equation}
and $N_{R}(k_{crit}), N_{C}(k_{crit})$ are given by \eqref{eq:NR0-values} and \eqref{eq:NC-value}.
\end{proposition}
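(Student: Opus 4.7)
My plan is to reduce the statement to a concentration argument around the two expectation curves. By Remarks~\ref{rem:m-red} and~\ref{rem:n-red} the reduced dimensions satisfy $\EE[m_{red}] = N_{R}(k)$ and $\EE[n_{red}] = N_{C}(k)$, so it suffices to establish (i) that $\Delta(k) := N_{R}(k) - N_{C}(k)$ is strictly positive on $\{1,2,\dotsc,k_{crit}-1\}$ and has a unique sign change at $k_{crit}$, and (ii) that both $m_{red}$ and $n_{red}$ concentrate tightly enough around their means that $m_{red} \geq n_{red}$ holds with high probability whenever $\Delta(k) > 0$.

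For (i), I would study $\Delta$ as a real-analytic function of $k \geq 0$. Direct substitution into \eqref{eq:NR0-values} and \eqref{eq:NC-value} gives $\Delta(0) = 0$, $\Delta(1) = 3 - 1 = 2$ (the $d$-dependence cancels completely at $k=1$), while $\Delta(k) \to 3 d^{2} - d^{3}$ as $k \to \infty$, which is negative for $d \geq 4$. The intermediate value theorem then yields the existence of $k_{crit} \in (1,\infty)$ with $N_{R}(k_{crit}) = N_{C}(k_{crit})$. Uniqueness of the sign change can be read off by observing that $N_{R}$ is a single $(1-d^{-2})^{k}$-term, whereas $N_{C}$ is a signed combination of three geometric terms whose bases are strictly smaller; a term-by-term comparison of the logarithmic derivatives shows that $N_{C}$ grows strictly faster than $N_{R}$ past the first root, so $\Delta(k)>0 \Leftrightarrow k < k_{crit}$.

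For (ii), the concentration of $m_{red}=|R_{b}|$ is already provided by Proposition~\ref{prop:NR0-deviation}. For $n_{red}=|C_{b}|$ I would mirror the construction there: define the Doob martingale $Y_{i} = \EE[\,n_{red} \mid \mc{F}_{i}\,]$ by revealing the $k$ cell placements sequentially. The bounded-difference coefficients $|Y_{i}-Y_{i-1}|$ are controlled by how many supported cells a single new particle can create or displace, which is at most the size of the union of the three rays through that particle, i.e.~$3d-2$. Applying Theorem~\ref{thm:Azuma} yields a sub-Gaussian tail for $n_{red}$ with variance proxy of order $d^{2} k$. Combining the two tail bounds by a union bound with confidence radius $\delta = \Delta(k)/3$, we obtain $m_{red} \geq N_{R}(k) - \delta > N_{C}(k) + \delta \geq n_{red}$ with probability at least $1 - \mc{O}\bigl(\exp(-\Delta(k)^{2}/(d^{2} k))\bigr)$.

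The main obstacle I anticipate is the quality of the Azuma bound for $n_{red}$. The $\Theta(d)$ increment it uses is much larger than the $\Theta(1)$ increment available for $m_{red}$, so the concentration radius it produces is only $o(\Delta(k))$ in the regime $\Delta(k) \gg d\sqrt{k}$. Since the engineering-relevant regime has $k$ comparable to $d^{2}$ (cf.~\eqref{eq:k-LaVision}) and $\Delta(k)$ grows approximately linearly in $k$ for small $k$, this regime does contain a wide band of useful sparsities; but verifying that $k_{crit}$ itself falls comfortably inside it---so that the threshold is sharp enough to match the experiments in Section~\ref{sec:experiments}---is the step I expect to require the most care, and is plausibly the reason the authors confine themselves to an average-case formulation rather than stating explicit rates.
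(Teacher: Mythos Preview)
Your plan is correct and in fact considerably more careful than the paper's own treatment. The paper offers no proof for this proposition at all: it is stated without a \texttt{proof} environment, and the justification consists of the heuristic sentence preceding it (for small $k$, $N_{R}(k)\approx 3k$ outpaces $N_{C}(k)\approx k$, until multiple incidences reverse this) together with Remark~\ref{rem:probability}, which \emph{defines} the phrase ``with high probability'' to mean nothing more than concentration of $|\supp(b)|$ around $N_{R}$ via Proposition~\ref{prop:NR0-deviation}. In particular, the paper never derives a tail bound for $n_{red}=|C_{b}|$; it simply replaces $m_{red}$ and $n_{red}$ by their expectations and declares the result to hold ``with high probability'' in this restricted sense.

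Your step~(i) makes explicit what the paper leaves as a one-line heuristic, and your computation $\Delta(1)=2$ is correct. Your step~(ii) goes genuinely beyond the paper: the Doob-martingale argument for $n_{red}$ with increment bound $3d-2$ is the natural companion to Proposition~\ref{prop:NR0-deviation} and would give an honest probabilistic statement rather than the paper's convention. Your closing worry about whether the resulting $\Theta(d\sqrt{k})$ fluctuation is small compared with $\Delta(k)$ near $k_{crit}$ is exactly the reason the authors retreat to the average-case formulation of Remark~\ref{rem:probability} --- they do not attempt to control it.
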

Figure \ref{fig:k-critical} shows the dependency $k_{crit} = k_{crit}(d)$ on the problem size $d$, as defined by \eqref{eq:def-k-crit} .
\begin{figure}
\centerline{
\includegraphics[width=0.5\textwidth]{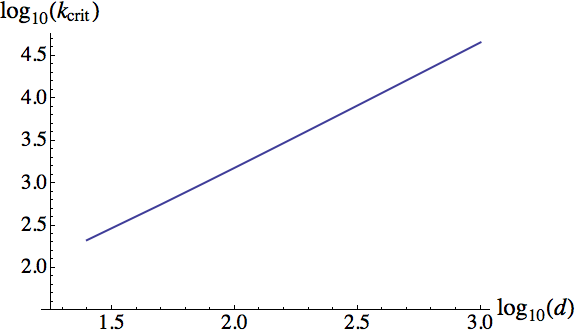}
}
\caption{Values $k_{crit} = k_{crit}(d)$ of the sparsity parameter such that $k \leq k_{crit}$ yield overdetermined reduced systems \eqref{eq:red-system}. For the depicted and practically relevant range of $d$, the slope of the $\log$-$\log$ curve slightly decreases in $1.65 \dots 1.55$.}
\label{fig:k-critical}
\end{figure}

\subsection{Unperturbed Systems}
\label{sec:recovery-unperturbed}

We consider the recovery properties of the 3D setup depicted in Fig.~\ref{fig_1}, based on Theorem \ref{thm:SP} and on the \emph{expected} quantities involved in the corresponding condition \eqref{eq:condition-Wang}, as worked out in Section \ref{sec:reduced-system}. Concerning the interpretation of the following claims, we refer to Remark \ref{rem:probability}.
\begin{proposition} \label{prop:appl-Wang}
 The system $A x = b$, with measurement matrix $A$ given by \eqref{eq:def-AdD}, admits unique recovery of $k$-sparse non-negative vectors $x$ with high probability, if
\begin{subequations}
\begin{gather} \label{eq:k-unpertubed}
 k \leq \frac{N_{C}(k_{\delta})}{1+\delta}
 = \frac{1}{3 \delta (1+\delta)} N_{R}(k_{\delta}),\qquad
 \delta > \frac{\sqrt{5}-1}{2},
 \intertext{where $k_{\delta}$ solves}
 N_{R}(k_{\delta}) = 3 \delta N_{C}(k_{\delta})
 \label{eq:def-kdelta}
\end{gather}
\end{subequations}
and $N_{R}(k), N_{C}(k)$ are given by \eqref{eq:NR0-values} and \eqref{eq:NC-value}.
\end{proposition}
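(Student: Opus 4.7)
The approach is to apply Theorem \ref{thm:SP} to the 3D measurement matrix (so $\ell=3$), replacing the deterministic expansion condition \eqref{eq:condition-Wang} by its average-case analogue. By Lemma \ref{eq:lem-NR} and Proposition \ref{prop:NC}, for a random $k$-sparse support $X\subset C$ the expected sizes of the sets entering \eqref{eq:condition-Wang} are
\[
\EE|\calN(X)| = N_R(k),\qquad \EE|\calN(\calN(X))\setminus\calN(\calN(X)^c)| = N_C(k),
\]
so the expected form of the expansion requirement reads $N_R(k)\ge 3\delta\, N_C(k)$.

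The first step would be to identify the largest sparsity $k_\delta$ at which this expected inequality still holds. Since $N_R(k)$ saturates at $|R|=3d^2$ while $N_C(k)$ grows much faster towards $|C|=d^3$, the ratio $N_R/N_C$ is monotonically decreasing on the relevant range, so $k_\delta$ is exactly the unique crossing point defined by \eqref{eq:def-kdelta}. The martingale concentration of Proposition \ref{prop:NR0-deviation} controls $|\calN(X)|$ around its mean, and an entirely analogous Doob-martingale argument—revealing the cells of $X$ one at a time and noting that each revelation can change $|\calN(\calN(X))\setminus\calN(\calN(X)^c)|$ by at most a bounded amount—delivers concentration of this second quantity around $N_C(k)$. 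Consequently, for $k\le k_\delta$, condition \eqref{eq:condition-Wang} is fulfilled with overwhelming probability in the sense of Remark \ref{rem:probability}.

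Finally, one follows the template of the proof of Theorem \ref{thm:wang} (on which Theorem \ref{thm:SP} is based): for any nonzero $v\in\ker(A)$ with $s=|I^-(v)|$, the modified expansion applied to $I^-(v)$ yields $|\calN(I^-(v))|\ge 3\delta\,s$ via $|\calN(\calN(I^-(v)))\setminus\calN(\calN(I^-(v))^c)|\ge s$, and the bounds $\ell|I^+(v)|\ge|\calN(I^+(v))|=|\calN(I^-(v))|$ together with $\delta(1+\delta)>1$ then force $|\supp(v)|\ge(1+\delta)s$. Picking $\tilde S\subseteq\supp(v)$ of cardinality $\lfloor(1+\delta)s\rfloor$ and applying the expansion condition a second time delivers $|\calN(\tilde S)|\ge 3\delta(1+\delta)s>3s\ge|\calN(\supp(v))|$, which contradicts $\calN(\tilde S)\subseteq\calN(\supp(v))$. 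Interpreting the effective expansion limit through the reduced-system perspective of Proposition \ref{prop:redfeasSet}, namely that $N_C(k_\delta)$ plays the role of the ambient column count for random supports of size $k_\delta$, one obtains $s>N_C(k_\delta)/(1+\delta)$ for every nonzero nullspace vector. Theorem \ref{thm:AllPOS}(c) then yields uniqueness for every $k$-sparse non-negative $x^*$ with $k\le N_C(k_\delta)/(1+\delta)$, and the equivalent second form in \eqref{eq:k-unpertubed} is immediate from \eqref{eq:def-kdelta}.

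The hard part will be pinning down the ``effective expansion limit'' in the average-case setting: the auxiliary sets $\tilde S$ emerging in the Theorem \ref{thm:wang}-style contradiction are \emph{not} themselves random signal supports, so the transition from ''the expansion holds in expectation for random $X$ of size $k\le k_\delta$'' to ''the expansion holds for $\tilde S$ of size up to $N_C(k_\delta)$'' is the genuinely delicate step. One naturally passes through the reduced system of Definition \ref{eq:red-system}, where both sides of \eqref{eq:condition-Wang} become sizes within the reduced bipartite graph and the concentration of both $|\calN(X)|$ and $|C_b(X)|$ simultaneously is invoked.
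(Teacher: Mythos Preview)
Your approach is the same as the paper's, but you are doing far more than the paper actually does. The paper's entire proof is a single sentence: it invokes Theorem~\ref{thm:SP} and replaces the two quantities in condition~\eqref{eq:condition-Wang} by their expected values $N_R(k)$ and $N_C(k)$, citing Remarks~\ref{rem:m-red} and~\ref{rem:n-red}. The bound $k\le N_C(k_\delta)/(1+\delta)$ is then read off directly from the Theorem~\ref{thm:wang} template (where $\nu$ becomes $N_C(k_\delta)$, the column count of the reduced system), exactly as you indicate via Remark~\ref{rem:unperturbed}.

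Everything you add---the Doob-martingale concentration for $|C_b|$, the explicit rerun of the contradiction argument from Theorem~\ref{thm:wang}, and especially your closing paragraph on the auxiliary sets $\tilde S$ not being random signal supports---is analysis the paper simply does not carry out. You have correctly identified that passing from ``expansion holds in expectation for random $X$'' to ``expansion holds for the deterministic $\tilde S$ arising in the nullspace argument'' is the genuinely nontrivial step, and that the paper's formulation of Theorem~\ref{thm:SP} (stated without proof) and the one-line proof of Proposition~\ref{prop:appl-Wang} leave this heuristic. In short: your sketch matches the paper's route and is, if anything, more honest about where the rigour would have to be supplied.
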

\begin{proof}
 The assertion follows from replacing the quantities forming condition \eqref{eq:condition-Wang} by their expected values, due to Remarks \ref{rem:m-red} and \ref{rem:n-red}.
\end{proof}
\begin{remark} \label{rem:unperturbed}
Equation \eqref{eq:def-kdelta} shows that unique recovery of a $k$-sparse, $k\le\frac{n_{red}}{(1+\delta)}$, non-negative vector can be expected using the unperturbed measurement matrix provided the reduced system \eqref{eq:red-system} is by a factor $m_{red} \geq 1.854 \, n_{red}$ overdetermined. See Figure \ref{fig:k-All} for an illustration.
\end{remark}

\subsection{Perturbed Systems}
\label{sec:recovery-perturbed}

Analogously to the previous section, we evaluate the average recovery performance using perturbed systems based on Theorem \ref{thm:SPCS1}.
\begin{proposition} \label{prop:appl-Hassibi}
 The system $\tilde A x = b$, with perturbed measurement matrix $\tilde A$ given by \eqref{eq:def-AdD}, admits unique recovery of $k$-sparse non-negative vectors $x$ with high probability, if $k$ satisfies condition $k \leq k_{crit}$ from Prop.~\ref{prop:kcrit}, that is, if the reduced system \eqref{eq:red-system} is overdetermined.
\end{proposition}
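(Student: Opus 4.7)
The plan is to run the same average-case argument as in Proposition \ref{prop:appl-Wang}, but invoke Theorem \ref{thm:SPCS1} in place of Theorem \ref{thm:SP} so as to exploit the weaker expansion constant available after perturbation. In the 3D setting of Figure \ref{fig_1} one has $\ell = 3$, so the bound $\delta > 1/\ell$ in condition \eqref{eq:Hassibi-condition} becomes $\delta > 1/3$, much weaker than the $\delta > (\sqrt{5}-1)/2 \approx 0.618$ needed in Theorem \ref{thm:SP}.

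First I would identify the random quantities appearing in \eqref{eq:Hassibi-condition} with the dimensions of the reduced system from \eqref{def:mn-red}. For a random $k$-sparse vector $x$ with support $X$, Remark \ref{rem:m-red} gives $\EE[|\calN(X)|] = N_R(k)$ and Remark \ref{rem:n-red} gives $\EE[|\calN(\calN(X)) \setminus \calN(\calN(X)^c)|] = N_C(k)$. Substituting these expected values, \eqref{eq:Hassibi-condition} becomes $N_R(k) \geq 3\delta \, N_C(k)$ with $\delta > 1/3$. Letting $\delta$ approach $1/3$ from above reduces this to the strict inequality $N_R(k) > N_C(k)$, which by \eqref{eq:def-k-crit} and the comparison of the rates $N_R(k) \approx 3k$ versus $N_C(k) \approx k$ near $k=0$ (see Figures \ref{fig:NR-Plot} and \ref{fig:NC-Plot}) is precisely the condition $k \leq k_{crit}$ from Proposition \ref{prop:kcrit}, i.e., that the reduced system \eqref{eq:red-system} is overdetermined in expectation.

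The \emph{with high probability} qualifier is then inherited from Remark \ref{rem:probability}. Proposition \ref{prop:NR0-deviation} already provides sharp concentration of $m_{red} = |R_b|$ around $N_R(k)$; the same martingale / Azuma scheme applied to the sequence of conditional expectations of $n_{red} = |C_b|$ after successive particle placements yields an analogous tail bound, since placing one more particle changes $n_{red}$ by only a bounded amount (controlled by the at most $\ell$ incident rays and their supporting cells). Combining the two concentration estimates, for any fixed $\delta$ slightly above $1/3$ the realized reduced system satisfies \eqref{eq:Hassibi-condition} with high probability whenever $N_R(k) > N_C(k)$ holds strictly. Theorem \ref{thm:SPCS1}, whose derivation runs through Lemma \ref{lem:Hassibi-4-2} and Theorem \ref{thm:Hassibi-4-1}, then produces the desired perturbation $\tilde A$ for which $\{x \colon \tilde A x = \tilde A x^*, x \geq 0\}$ is a singleton.

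I expect the main obstacle to be exactly this transition from the expectation-level inequality to a bona fide probabilistic recovery statement: one must argue that the expansion condition need only hold on the single realized support $X = \supp(x)$ rather than simultaneously over all $\binom{n}{k}$ supports of size $k$, and that Proposition \ref{prop:redfeasSet} makes such a support-wise local condition sufficient for recovery in the original system. This is precisely the content of Remark \ref{rem:probability} and is what separates the weak threshold proved here from the strong threshold obstructed in Section \ref{sec:strong}.
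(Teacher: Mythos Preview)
Your proposal is correct and follows essentially the same route as the paper: invoke Theorem \ref{thm:SPCS1}, specialize to $\ell=3$ so that the constraint $\delta>1/\ell$ becomes $\delta>1/3$, and replace the two sides of \eqref{eq:Hassibi-condition} by their expected values $N_R(k)$ and $N_C(k)$ to recover the condition $k\leq k_{crit}$ from Proposition \ref{prop:kcrit}. The paper's own proof is in fact a one-liner to this effect; your additional discussion of concentration for $n_{red}$ via a second Azuma argument and of the support-wise (rather than uniform) reading of the expansion condition goes beyond what the paper spells out, but is consistent with the intended meaning of Remark \ref{rem:probability}.
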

\begin{proof}
 Immediate from Theorem \ref{thm:SPCS1}, replacing the quantities forming condition \eqref{eq:Hassibi-condition} by their expected values, and taking into account $\ell=3$ for the measurement matrix \eqref{eq:def-AdD} and the case $D=3$.
\end{proof}
\begin{remark}
 In view of this assertion and Remark \ref{rem:unperturbed}, it is remarkable that a significant gain of recovery performance can be obtained by a simple device: structure-preserving perturbation of the measurement matrix. See Figure \ref{fig:k-All} for an illustration.
\end{remark}

\begin{figure}
\centerline{
\includegraphics[width=0.6\textwidth]{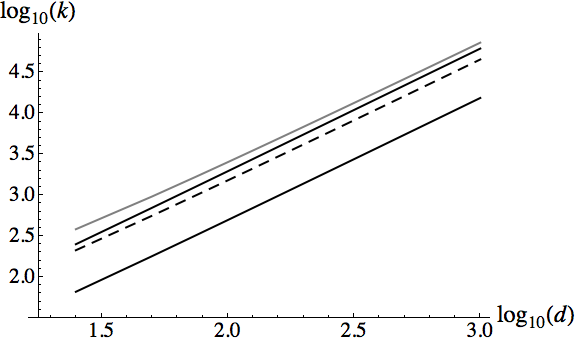}
}
\caption{
Critical upper bound sparsity values $k = k(d)$ that guarantee unique recovery of $k$-sparse vectors $x$ on average with high probability. From bottom to top: $k_{\delta}$ \eqref{eq:k-unpertubed} 
for unperturbed matrices $A$, $k_{crit}$ \eqref{eq:def-k-crit} resulting in overdetermined reduced systems, $k_{max}$ \eqref{eq:kmax-criterion} for underdetermined perturbed matrices $A$, and fully random measurement matrices.
}
\label{fig:k-All}
\end{figure}

\subsection{Underdetermined Perturbed Systems}
\label{sec:recovery-perturbed-underdet}

Based on \eqref{eq:WendelPR} and the average case analysis of condition \eqref{eq:Hassibi-condition} (Section 
\ref{sec:reduced-system}), we devise a criterion for determining the maximal sparsity value $k$ (minimal sparse scenario), such that any $k$-sparse vector $x$ can be uniquely recovered with high probability using the measurement matrix $A$ given by \eqref{eq:def-AdD}.
Unlike Propositions \ref{prop:appl-Wang} and \ref{prop:appl-Hassibi}, we specifically consider here less sparse scenarios that result in \emph{underdetermined} reduced systems \eqref{eq:red-system}. 
\begin{proposition} \label{prop:kmax}
Let $A$ be a matrix satisfying the assumptions of Lemma \ref{lem:Hassibi-4-2} with $\tilde r_{0} = N_{R}(k_{max})$, where $k_{max}$ solves
\begin{equation} \label{eq:kmax-definition}
 N_{R}(\tilde k_{max}) = \delta N_{C}(\tilde k_{max}),\qquad
 \delta > \frac{\sqrt{5}-1}{2},
\end{equation}
with $N_{R}(k), N_{C}(k)$ given by \eqref{eq:NR0-values} and \eqref{eq:NC-value}. Then a $k$-sparse vector $x$ can be uniquely recovered with high probability, if
\begin{equation} \label{eq:kmax-criterion}
k \leq k_{max} = \frac{N_{R}(\tilde k_{max})}{3}.
\end{equation}
\end{proposition}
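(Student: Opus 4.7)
The plan is to combine the perturbation-rank estimate of Lemma \ref{lem:Hassibi-4-2} with Wendel's separation probability \eqref{eq:WendelPR}, applied to the reduced system provided by Proposition \ref{prop:redfeasSet}. This extends Proposition \ref{prop:appl-Hassibi} into the regime where the reduced system is underdetermined, and uses that $\delta>(\sqrt{5}-1)/2>1/2$ to keep $n_{red}/m_{red}$ strictly below~$2$.

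First, I would invoke Proposition \ref{prop:redfeasSet} to replace the uniqueness question for $\tilde A x = b$ by uniqueness of the nonnegative solution of $\tilde A_{R_b C_b} x_{C_b} = b_{R_b}$. For a random $k$-sparse $x^{*}$, the reduced dimensions $m_{red}=|R_b|$ and $n_{red}=|C_b|$ concentrate around $N_R(k)$ and $N_C(k)$: the bound on $m_{red}$ is exactly Proposition \ref{prop:NR0-deviation}, and a parallel bounded-difference martingale argument on $\EE[|C_b|\mid\mc{F}_i]$ yields a comparable deviation estimate for $n_{red}$, so both reduced dimensions are within small relative error of their expectations with high probability.

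Next, under the hypothesis of Lemma \ref{lem:Hassibi-4-2} with $\tilde r_0=N_R(k_{max})$, that lemma produces a structure-preserving perturbation $\tilde A$ with complete Kruskal rank $r_0(\tilde A)\geq N_R(k_{max})$. Consequently every subset of at most $N_R(k_{max})$ columns of $\tilde A$ is linearly independent, hence in general linear position. For $k\leq k_{max}$, the concentration step yields $m_{red}\leq N_R(k_{max})$ and $n_{red}\leq N_C(k_{max}) = N_R(k_{max})/\delta$ with high probability, so the $n_{red}$ columns of $\tilde A_{R_b C_b}$ live in $\R^{m_{red}}$ in general position with ratio $n_{red}/m_{red}\leq 1/\delta<(\sqrt{5}+1)/2<2$.

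The third step recasts uniqueness via the separation certificate of Theorem \ref{thm:individual_uniqueness_POS}(c) (or, where applicable, the sign-flipped form of Theorem \ref{thm:individual_uniqueness_BIN}(d)) as the event that the columns of $\tilde A_{R_b C_b}\Diag(\eins - 2x^{*}_{C_b})$ lie in a common open half-space through the origin. Since the uniform law on $k$-subsets $S\subset C$ induces a symmetric sign pattern, and since Step~2 supplies general position of these columns, Wendel's formula \eqref{eq:WendelPR} applies with $n=n_{red}$ and $m=m_{red}$ and gives separation probability $\Pr(n_{red},m_{red})\to 1$ as $d\to\infty$, precisely because $n_{red}/m_{red}<2$. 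A union bound with the concentration event of Step~1 closes the argument.

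The hard part, as I see it, is making Wendel's hypotheses rigorous in this combinatorial setting: general position follows cleanly from the Kruskal-rank bound of Step~2, but the required distributional symmetry of the sign assignment must be carefully extracted from the uniform law on $k$-sparse supports, and the randomness in $\supp(x^{*})$ needs to be disentangled from that of the perturbation. A secondary technicality is quantifying the nonasymptotic rate in Wendel's formula and combining it with the Azuma-type estimate of Proposition \ref{prop:NR0-deviation} into a single clean high-probability statement with explicit constants.
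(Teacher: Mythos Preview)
Your proposal takes a route the paper explicitly avoids. The paper's proof does not go through Wendel's formula at all; it is a two-line application of the deterministic chain Lemma~\ref{lem:Hassibi-4-2} $\Rightarrow$ Theorem~\ref{thm:Hassibi-4-1} $\Rightarrow$ Remark~\ref{rem:Hassibi-recovery}. Concretely: the hypothesis places $\tilde r_0 = N_R(\tilde k_{max})$ in the range where Lemma~\ref{lem:Hassibi-4-2} yields a perturbation $\tilde A$ with complete rank $r_0(\tilde A)\ge \tilde r_0$; Theorem~\ref{thm:Hassibi-4-1} then forces every nullspace vector to have at least $r_0/\ell = N_R(\tilde k_{max})/3$ negative entries; and Theorem~\ref{thm:AllPOS}(c) (via Remark~\ref{rem:Hassibi-recovery}) delivers unique nonnegative recovery for all $k\le N_R(\tilde k_{max})/3 = k_{max}$. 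The equation $N_R(\tilde k_{max})=\delta\,N_C(\tilde k_{max})$ enters only as the expected (average-case) version of the expansion condition~\eqref{eq:condition-Wang} that justifies invoking Lemma~\ref{lem:Hassibi-4-2} at this scale; no separation argument is needed.

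Your Step~3 contains a genuine gap, and it is precisely the obstruction the paper flags in the paragraph following the proposition. Wendel's formula \eqref{eq:WendelPR} requires the point distribution to be symmetric about the origin, i.e.\ each column is flipped independently with probability $1/2$. Here the columns of $\tilde A_{R_b C_b}$ are all nonnegative and only the $k$ columns indexed by the support get a sign flip; the resulting configuration has $k$ points in one orthant and $n_{red}-k$ in the opposite one, which is maximally asymmetric. Your assertion that ``the uniform law on $k$-subsets $S\subset C$ induces a symmetric sign pattern'' is therefore false, and the paper states outright that ``the underlying distribution lacks symmetry with respect to the origin.'' This is why the authors settle for the weaker threshold $\delta>(\sqrt5-1)/2$ obtained via Theorem~\ref{thm:Hassibi-4-1}, rather than the $\delta>1/2$ your Wendel argument would yield if it went through. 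The Kruskal-rank bound you derive in Step~2 is the right ingredient; just feed it into Theorem~\ref{thm:Hassibi-4-1} instead of Wendel and the proof closes immediately.
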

\begin{proof}
By assumption and Lemma \ref{lem:Hassibi-4-2}, Theorem \ref{thm:Hassibi-4-1} (see also Remark \ref{rem:Hassibi-recovery}) implies \eqref{eq:kmax-criterion}, thereby taking into account that Eqn.~\eqref{eq:kmax-definition} defining $k_{max}$ reflects the expected version of condition \eqref{eq:condition-Wang}, subdivided by the factor $3$ due to \eqref{eq:kmax-criterion}.
\end{proof}

\cite{Wendel-62,CoverSeparability-65,Man09ProbInteger}
Figure \ref{fig:k-All} illustrates the value $k_{max}$ \eqref{eq:kmax-criterion} and compares it to the previous results.

\vspace{0.5cm}
Finally, we comment on the uniqueness condition established in \cite{Man09ProbInteger} which corresponds to the top $k(d)$ curve in Figure \ref{fig:k-All}. This result does not apply to our setting.
The reason is that a basic assumption underlying the application of \eqref{eq:WendelPR} does \emph{not} hold. While after some perturbation the points corresponding to the columns of $\tilde A$ and the sparsity value $|I^{-}(x)|=k$ are in general position, the underlying distribution lacks symmetry with respect to the origin. As a result, we cannot establish the superior performance of ``fully'' random sensors considered in \cite{Man09ProbInteger}. 

\subsection{Two Cameras are Not Enough}
\label{sec:2-cam}
In the present section, we briefly discuss how the previously obtained bounds on sparsity apply in the 2D scenario. To this end, we first compute the expected value of nonempty cells connected to $R_{b}$ measurements
generated by a $k$ sparse nonnegative vector.
\begin{proposition}\label{prop:NC-2D}
In 2D,  the expected size of subsets of cells that support random subsets $R_{b} \subset R$ of observed non-zero measurements, is 
\begin{equation} \label{eq:NC-value-2}
 N_{C} = N_{C}(k) = d^{2} \bigg( 1 - \Big(1-\frac{1}{d}\Big)^{k}\bigg)^2\ ,
\end{equation}
for a given sparsity parameter $k$,
\end{proposition}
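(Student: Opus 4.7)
The plan is to mirror the inclusion--exclusion argument used for Proposition \ref{prop:NC}, adapted to the simpler 2D geometry. In 2D the rays split as $R = R_1 \cup R_2$ (horizontal and vertical, each of cardinality $d$) and every cell is the unique intersection of a ray from $R_1$ with a ray from $R_2$; the bijection between $C$ and $R_1 \times R_2$ replaces the 3D triple set $R_{1,2,3}$ used in the proof of Proposition \ref{prop:NC}. By \eqref{eq:def-RbCb}, the cell $c = r_1 \cap r_2$ lies in $C_b$ iff \emph{both} incident rays belong to $R_b$, i.e.\ iff $X_{r_1} = X_{r_2} = 0$. Hence
\[
 N_C \;=\; \sum_{r_1 \in R_1,\, r_2 \in R_2} \EE\bigl[(1-X_{r_1})(1-X_{r_2})\bigr],
\]
which I would expand via inclusion--exclusion into the three pieces $1$, $-(\EE[X_{r_1}] + \EE[X_{r_2}])$, and $\EE[X_{r_1}X_{r_2}]$.

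The single-ray expectations are delivered directly by \eqref{eq:E-Xr}--\eqref{eq:def-pd} specialized to $D=2$: $\EE[X_{r_i}] = p_d^k = (1-1/d)^k$. The only genuinely new computation is the pair expectation $\EE[X_{r_1}X_{r_2}]$. Since two orthogonal rays from $R_1$ and $R_2$ share exactly one cell, $|r_1 \cup r_2| = 2d-1$, so the probability that none of the $k$ uniformly placed particles lies in that union equals $(1-(2d-1)/d^2)^k$. Summing over the $d^2$ pairs of orthogonal rays yields
\[
 N_C \;=\; d^2\Bigl(1 - 2(1-1/d)^k + (1-(2d-1)/d^2)^k\Bigr).
\]

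The last step is to match this expression to the claimed factored form. Here I would invoke the elementary identity $(1-1/d)^2 = 1 - (2d-1)/d^2$, which rewrites $(1-(2d-1)/d^2)^k = (1-1/d)^{2k}$; the bracket then collapses into the perfect square $\bigl(1-(1-1/d)^k\bigr)^2$, and \eqref{eq:NC-value-2} follows. There is no real obstacle in this argument; the only subtle point worth flagging is that the factorization into a square is specific to $D=2$ (the analogous identity fails for $D=3$, since $(1-1/d^2)^3 \neq 1 - (3d-2)/d^3$), which is precisely why the 2D expression is so much cleaner than its 3D counterpart \eqref{eq:NC-value}.
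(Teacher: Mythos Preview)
Your proposal is correct and follows essentially the same inclusion--exclusion argument as the paper: partition $R=R_1\cup R_2$, identify cells with pairs $(r_1,r_2)$, compute $\EE[(1-X_{r_1})(1-X_{r_2})]$ using $\EE[X_{r_i}]=(1-1/d)^k$ and $\EE[X_{r_1}X_{r_2}]=(1-(2d-1)/d^2)^k$, then collapse via $(1-1/d)^2=1-(2d-1)/d^2$. Your statement of the membership criterion (``$c\in C_b$ iff $X_{r_1}=X_{r_2}=0$'') is in fact sharper than the paper's verbal description, and your closing remark on why the perfect-square factorization is specific to $D=2$ is a nice addition not present in the original.
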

\begin{proof}
We partition the set of rays $R = R_{1} \cup R_{2}$ according to the two projection images (Fig.~\ref{fig_1}), left, and associate with the cells $C$ the corresponding set of pairs of projection rays
\[
R_{1,2} = \big\{ (r_{1}, r_{2}) \colon \cap_{i=1}^{2} r_{i} \neq \emptyset,\; r_{i} \in R_{i},\; i=1,2 \big\},
\]
with each pair intersecting in a single cell. Thus, we have $|R_{1,2}| = |C| = d^{2}$, and each cell $c_{ij} = r_{i} \cap r_{j} $ belongs to the set $C_{b}$ supporting $R_{b}$ if 
$R_{b} \cap (r_{i} \cup r_{j}) \neq \emptyset$. In terms of random variables $X_{r}$ indicating zero-measurements by $X_{r}=1$, this means that $c_{ij} \in C_{b}$ if not $X_{r_{i}}=X_{r_{j}}=1$. Thus,
\begin{align*}
 N_{C} &= \EE\Big[
 \sum_{R_{1,2}} 
 (1-X_{r_{1}})(1-X_{r_{2}}) \Big] \\
 &=  \sum_{R_{1,2}} \Big( 1 - 
  \big(\EE[X_{r_{1}}] + \EE[X_{r_{2}}] \big)
 + \sum_{1 \leq i < j \leq 2} \EE[X_{r_{i}} X_{r_{j}}]
] \Big), 
\end{align*}
taking  the intersection of projection rays $r_{i}, r_{j}$ into account.
We obtained $\EE[X_{r_{i}}] = p_{d}^{k} = (1-\frac{1}{d})^{k}$ in \eqref{eq:E-Xr} and \eqref{eq:def-pd}.
The event that both rays correspond to zero measurements $X_{r_{i}} X_{r_{j}}=1$ happens with probability
\[
\Big(1 - \frac{|r_{i} \cup r_{j}|}{|C|}\Big)^{k}
= \Big(1 - \frac{2 d-1}{d^{2}}\Big)^{k} = \Big(1 - \frac{1}{d}\Big)^{2k}.
\]
\end{proof}

By Prop. \ref{prop:NC-2D} and Lemma \ref{eq:lem-NR} we can now compute the
the expected ratio of the dimensions of the reduced system, further denoted by $c$.
We solve the polynomial $N_R(k)=c N_C(k)$ according to and \eqref{eq:NC-value-2}.
Interesting are the values $c\in\{2\delta,1,\delta,\frac{1}{2}\}$.
For example, if $c=2\delta$, we obtain guaranteed recovery of all $1$-sparse vectors,
which also equals the strong threshold for the 2D case.
If $c=1$, we obtain, on average, that any $k$-sparse vector $x$, with
\begin{equation}
  \label{eq:Er-critical}
  k\le k_{crit}=\frac{\log\left( \frac{d-2}{d}\right)}{\log\left( \frac{d-1}{d}\right)}\approx 2\ ,
\end{equation}
induces reduced reduced overdetermined systems. Thus two particles can always be reconstructed,
after perturbation.
If $c=\frac{1}{2}$ the critical sparsity value approximately equals $4$ for arbitrary $d$. This is the best
achievable bound, which is obviously useless for application. For $k=3$ it can be shown that
the probability of correct recovery via the perturbed matrix $A_d^2$ is
\begin{equation*}
1-\frac{2\cdot 4\cdot \binom{d}{2}\binom{d}{3}+4\cdot \binom{d}{3}^2}{\binom{d^3}{3}}=
\frac{ d^2 + 6 d -10 }{3 (d^2-2)}\xrightarrow{d \to \infty} 1/3\ .
\end{equation*}
We mention that the expected relative values of $N_R$ and $N_C$ do not vary much with
different two camera arrangements. This highly pessimistic results can be explained by the fact that 
there is no expander with constant left degree $\ell$ less than $3$.

\section{Numerical Experiments and Discussion}
\label{sec:experiments}

In this section we empirically investigate bounds on the required sparsity that
guarantee unique nonnegative or binary $k$-sparse solutions.

\subsection{Reduced Systems versus Analytical Sparsity Thresholds}
\label{sec:frac}
The workhorse of the previous theoretical average case performance analysis of the discrete
tomography matrix from \eqref{eq:A_dD}
is the derivation of the expected number of nonzero rows $N_R(k)$ induced by the
$k$-sparse vector along with the number $N_C(k)$ of ''active'' cells which cannot be empty.
This can be done also empirically, see Fig. \ref{fig:Frac_2D_3D}, left, for the 2D case
and right, for the 3D case. To generate the figures we varied $k\in\{1,2,\cdots, 2000\}$
and $d\in\{10,11,\cdots, 100\}$ in 2D and $k\in\{1,2,\cdots, 2000\}$
and $d\in\{10,11,\cdots, 100\}$ in 3D, respectively, and generated for each point $(k,d)$
500 problem instances. The plots show $N_R(k,d)/N_C(k,d)$
along with the curves: $k_{\delta}$ \eqref{eq:k-unpertubed} for unperturbed matrices $A$, $k_{crit}$ 
\eqref{eq:def-k-crit} resulting in overdetermined reduced systems, $k_{max}$ \eqref{eq:kmax-criterion} for underdetermined perturbed matrices $A$, and $k_{opt}$ which solves
\begin{equation}\label{eq:k_opt} 
N_R(k_{opt})=0.5 N_C(k_{opt})\ .
\end{equation} 
\begin{figure}
\begin{center}
\begin{tabular}{cc}
\includegraphics[clip,width=0.45\textwidth]{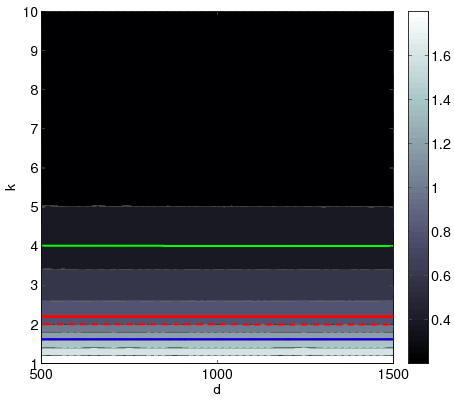} &
\includegraphics[clip,width=0.45\textwidth]{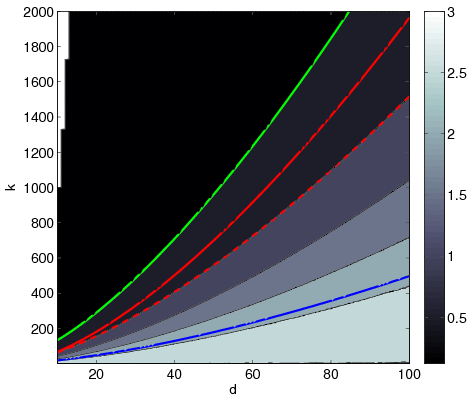}\\
\end{tabular}
\end{center}
\caption{The contourplots of the average fraction of the reduced systems as a function
of the resolution parameter $d$ and the sparsity parameter $k$.
{\bf Left:} In agreement with the results of Section 5.5, the plots for the 2D case  show that level lines of $N_R(k,d)/N_C(k,d)$ are constant with varying $d$. 
{\bf Right:} In 3D the situation dramatically changes. Higher sparsity values are allowed for increasing
values of $d$, as the derived threshold curves show.
Below the blue curve $k_{\delta}$ \eqref{eq:k-unpertubed} reconstruction for unperturbed systems is 
guaranteed with high probability.
Below the dashed red curve $k_{crit}$ \eqref{eq:def-k-crit} reduced systems are overdetermined. For points below the solid red curve  $k_{max}$ \eqref{eq:kmax-criterion} reconstruction is guaranteed for perturbed systems. Finally, problem instances under the green curve $k_{opt}$ \eqref{eq:k_opt} could be recovered if the reduced matrices would follow a symmetrical distribution with respect to the origin.}
\label{fig:Frac_2D_3D}
\end{figure}

\subsection{Empirical Phase Transitions}
We further concentrate on the 3D case. In analogy to \cite{DonTan05} we assess the so called
\emph{phase transition} $\rho$ as a function of $d$, which
is reciprocally proportional to the undersampling ratio $\frac{m}{n}\in(0,1)$.
We consider $d\in\{10,11,\dots,100\}$, the corresponding matrix $A^3_d\in\R^{3d^2\times d^3}$ 
from \eqref{eq:A_dD} and its perturbed version $\tilde A$ and the sparsity as a fraction of $d^2$,
$k=\rho d^2$, for $\rho\in (0,1)$. 

This phase transition $\rho(d)$ indicates the necessary relative sparsity
to recover a $k$-sparse solution with overwhelming probability. More precisely, 
if $\|x\|_0\le\rho(d) \cdot d^2$, then with
overwhelming probability a random $k$-sparse nonnegative (or binary) vector $x^*$ is the unique
solution in $\calF_+:=\{x \colon Ax= Ax^*, x\ge 0\}$ or 
$\calF_{ 0,1}:=\{x \colon Ax= Ax^*, x\in[0,1]^n\}$, 
respectively. 
Uniqueness can be ''verified'' by minimizing and maximizing the same objective $f^\top x$
over $\calF_+$ or $\calF_{ 0,1}$, respectively. If the minimizers coincide for
several random vectors $f$ we claim uniqueness.
As shown in Fig. \ref{fig:sliceA3D} the threshold for a unique nonnegative solution
and a unique  $0/1$-bounded solution are quite close.

To generate the success and failure transition plots 
we generated $A$ according to \eqref{eq:A_dD} and $\tilde A$ by slightly perturbing 
its entries and varying $d\in\{10,11,\dots,100\}$ 
$\tilde A$ has the same sparsity structure as $A$, but random entries 
drawn from the standard uniform distribution on the open interval 
$(0.9,1.1)$. We have tried different perturbation levels, all leading
to similar results. Thus we adopted this interval for all
presented results.

Then for $\rho\in[0, 1]$ a $\rho d^2$-sparse  nonnegative or binary vector
was generated to compute the right hand side measurement vector
and for each $(d,\rho)$-point 50 random problem instances
were generated. A threshold-effect is clearly visible in all figures exhibiting 
parameter regions where the probability of exact reconstruction is
close to one and it is much stronger for the perturbed systems. The results are in excellent
agreement with the derived analytical thresholds. We refer to the 
figure captions for detailed explanations. Finally, we refer to
the summary in Figure \ref{fig:concl} for the computed sharp sparsity thresholds, 
which  are in excellent agreement with our numerical experiments.
\begin{figure}[h]
\begin{center}
\includegraphics[clip,width=0.55\textwidth]{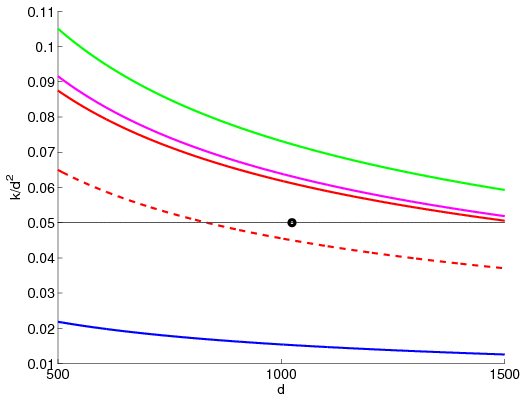}\\
\end{center}
\caption{ Relative critical upper bound sparsity values $k(d)$ in the practical relevant domain $d\in(500,1500)$ that guarantee unique recovery of $k$-sparse vectors $x$ on average with high probability. From bottom to top: $k_{\delta}$ \eqref{eq:k-unpertubed} for unperturbed matrices $A$ (blue line), $k_{crit}$ \eqref{eq:def-k-crit} resulting in overdetermined reduced systems (dashed red line), $k_{max}$ \eqref{eq:kmax-criterion} and $\tilde k_{max}$ \eqref{eq:kmax-definition} for underdetermined perturbed matrices $A$ (solid red and pink line),  and ideal random measurement matrices $k_{opt}$ (green line). The thin black line depicts the particle density
used by engineers in practice, while the black spot corresponds to the typical resolution parameter
$d=1024$. The results demonstrate that specific slight random perturbations of the TomoPIV measurement matrix considerably boost the expected reconstruction performance by at least 150\%.}
\label{fig:concl}
\end{figure}

\begin{figure}
\begin{center}
\begin{tabular}{cc}
\includegraphics[clip,width=0.38\textwidth]{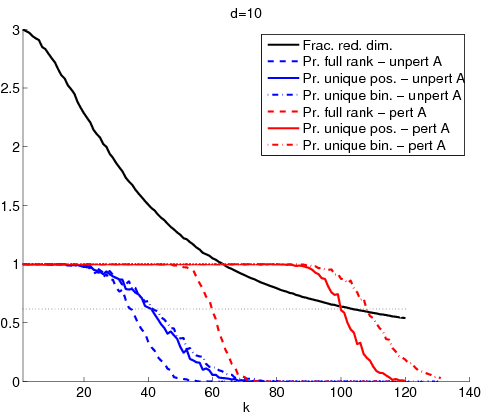} &
\includegraphics[clip,width=0.3\textwidth]{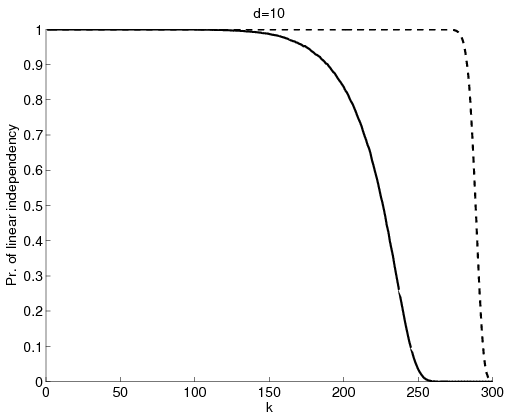}\\
\includegraphics[clip,width=0.38\textwidth]{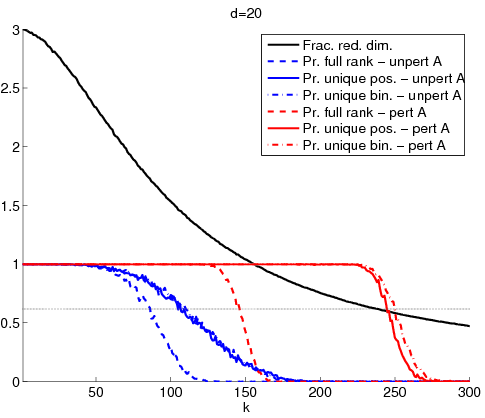} &
\includegraphics[clip,width=0.3\textwidth]{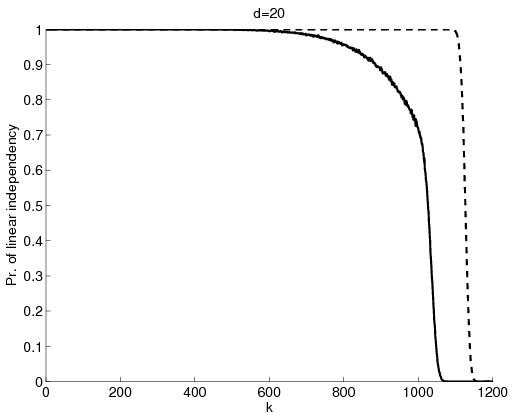}\\
\includegraphics[clip,width=0.38\textwidth]{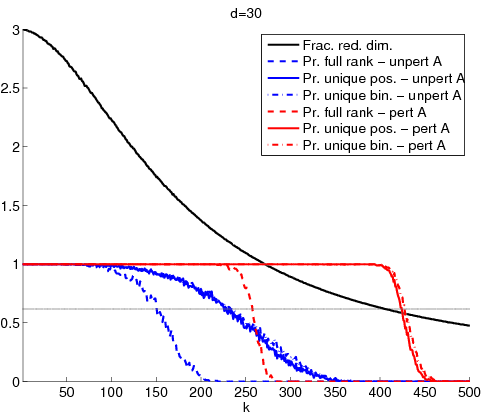}&
\includegraphics[clip,width=0.3\textwidth]{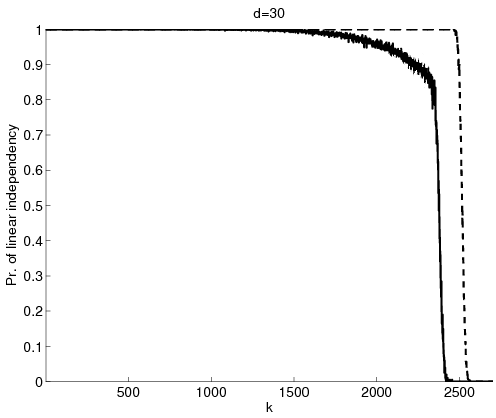}\\
\end{tabular}
\end{center}
\caption{
{\bf Left:} Recovery via the unperturbed matrix $A^3_d$ (blue curves), $d\in\{10,20,30\}$ (from top to down) 
versus the perturbed counterpart (red curves).
The dash-dot line depicts the empirical probability (500 trials) that reduced systems are overdetermined and have full rank. The solid line (blue: unperturbed, red: perturbed) shows the probability that a $k$-sparse nonnegative vector is unique. The dashed curve  shows the probability that a $k$-sparse binary solution is
the  unique solution of in $[0,1]^n$. Additional information like binarity gives only a slight performance
boost. The curve $k_{\delta}$ \eqref{eq:k-unpertubed} correctly predicts that
18 ($d=10$), 48 ($d=20$), and 85 ($d=30$) particle are
reconstructed with high probability via the unperturbed systems  and 
66 ($d=10$), 181 ($d=20$), 328 ($d=30$) particles, via the perturbed systems according to
$k_{max}$ \eqref{eq:kmax-criterion}. 
However, 105 ($d=10$),  241 ($d=20$),  408  ($d=30$), by $\tilde k_{max}$ from 
\eqref{eq:kmax-definition} are more accurate. Division by three does not seem to be necessary.
{\bf Right:} Empirical probability obtained from 10000 trials that $k$ random columns
of the unperturbed matrix (solid black line) or of the perturbed matrix (dashed black line)
are linearly independent.}
\label{fig:sliceA3D}
\end{figure}
\begin{figure}
\begin{center}
\begin{tabular}{cc}
\includegraphics[clip,width=0.43\textwidth]{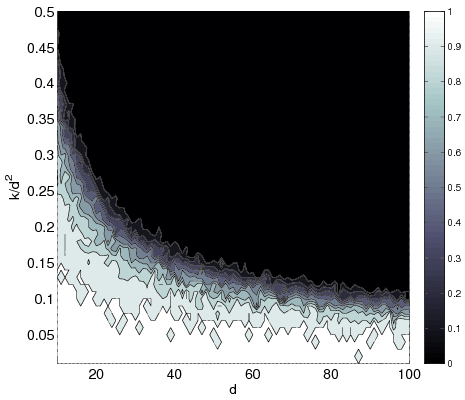} &
\includegraphics[clip,width=0.43\textwidth]{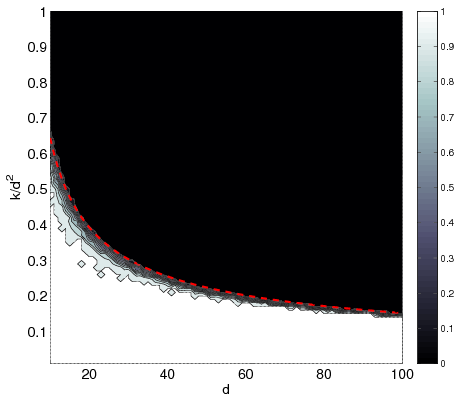}\\
\includegraphics[clip,width=0.43\textwidth]{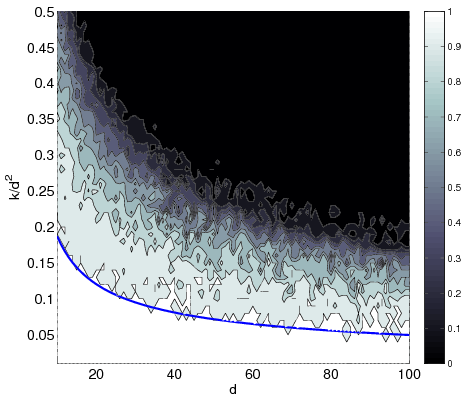} &
\includegraphics[clip,width=0.43\textwidth]{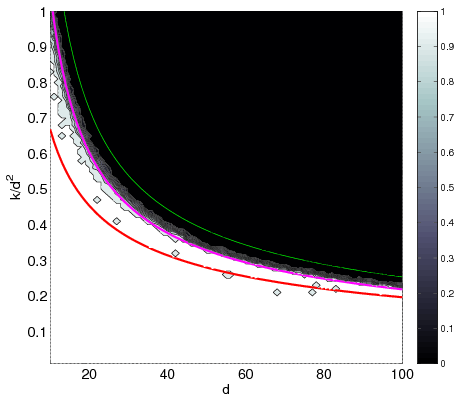}\\
\includegraphics[clip,width=0.43\textwidth]{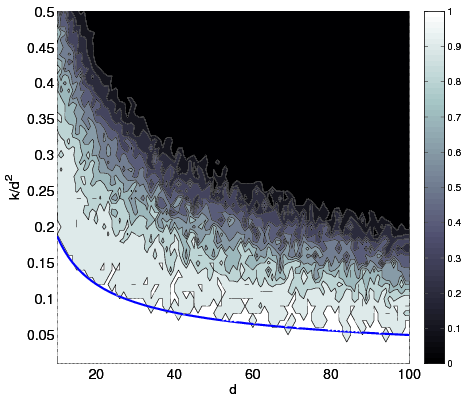} &
\includegraphics[clip,width=0.43\textwidth]{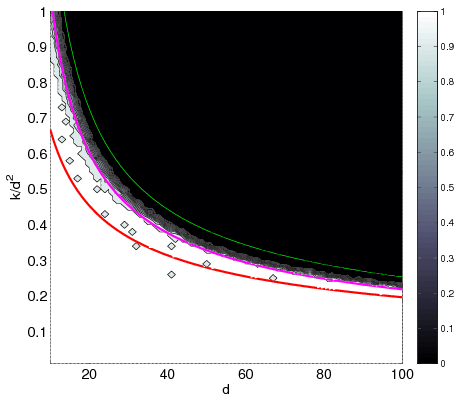}\\
\end{tabular}
\end{center}
\caption{{\bf Left:} Success and failure empirical phase transitions for unperturbed and perturbed systems
{\bf right}. {\bf Top:} Probability that the reduced matrices are overdetermined
and of full rank, along ({\bf right}) with the estimated relative critical sparsity level $k_{krit}$ 
(dashed red line) which induces overdetermined reduced matrices.
{\bf Middle:} Probability of uniqueness of a $k=\rho d^2$ sparse nonnegative vector.
{\bf Bottom:} Probability of uniqueness in $[0,1]^n$ of a $k=\rho d^2$ sparse binary vector.
The blue curve depicts again $k_{\delta}$ \eqref{eq:k-unpertubed}, the dashed red curve $k_{crit}$ 
\eqref{eq:def-k-crit}, the solid red curve  $k_{max}$ \eqref{eq:kmax-criterion}, $\tilde k_{max}$ 
\eqref{eq:kmax-criterion}
and the green curve $k_{opt}$ \eqref{eq:k_opt}.
In case of the perturbed matrix $\tilde A$ exact recovery is possible \emph{beyond} 
overdetermined reduced matrices. Moreover $\tilde k_{max}$ follows most accurately the empirical phase
transition for perturbed systems.
}
\label{fig:contour3D}
\end{figure}

\section{Conclusions}
The main contribution of this work is the transfer of recent results
on compressive sensing via expander graphs with bad expansion properties
to the discrete tomography problem. In particular, we consider a sparse
binary measurement matrix, which encodes the incidence relation between
projection rays and image discretization cells, along with its
slightly perturbed counterpart.  While the expected expansion of the underlying graph
does not change with perturbation, the recovery performance can be boosted significantly. 
We investigate the average performance in recovery of exact sparse nonnegative signals by 
analyzing the properties of reduced systems obtained by eliminating zero measurements and related redundant discretization cells. We compute sharp sparsity thresholds, 
such that the maximal sparsity can be determined precisely for both perturbed and 
unperturbed scenarios. Our theoretical analysis suggests that a
similar procedure can be applied to different geometries.

\bibliographystyle{plain}

\begin{thebibliography}{10}

\bibitem{Azuma1967}
K.~Azuma.
\newblock {W}eighted sums of certain dependent random variables.
\newblock {\em Tohoku Math.~J.}, 19(3):357--367, 1967.

\bibitem{RIP-P-SMM-08}
R.~Berinde and P.~Indyk.
\newblock Sparse recovery using sparse random matrices, 2008.
\newblock MIT-CSAIL Technical Report.

\bibitem{Candes-CompressiveSampling-06}
E.~Cand{\`e}s.
\newblock {C}ompressive sampling.
\newblock In {\em Int.~Congress of Math.}, volume~3, Madrid, Spain, 2006.

\bibitem{ErrorCorrectingLP-05}
E.~Cand{\`e}s, M.~Rudelson, T.~Tao, and R.~Vershynin.
\newblock {E}rror {C}orrecting via {L}inear {P}rogramming.
\newblock In {\em 46th Ann.~IEEE Symp.~Found.~Computer Science (FOCS'05)},
  pages 295--308, 2005.

\bibitem{CoverSeparability-65}
T.M. Cover.
\newblock {G}eometrical and {S}tatistical {P}roperties of {S}ystems of {L}inear
  {I}nequalities with {A}pplications in {P}attern {R}ecognition.
\newblock {\em IEEE Trans.~Electr.~Comp.}, EC-14(3):326--334, 1965.

\bibitem{DasGupta2008}
A.~DasGupta.
\newblock {\em {A}symptotic {T}heory of {S}tatistics and {P}robability}.
\newblock Springer, 2008.

\bibitem{Devroye1996}
L.~Devroye, L.~Gy\"{o}rfi, and G.~Lugosi.
\newblock {\em {A} {P}robabilistic {T}heory of {P}attern {R}ecognition}.
\newblock Springer, 1996.

\bibitem{CompressedSensing-06}
D.~Donoho.
\newblock {C}ompressed {S}ensing.
\newblock {\em IEEE Trans.~Information Theory}, 52:1289--1306, 2006.

\bibitem{L1LPSparseApproximate-06}
D.L. Donoho.
\newblock {F}or most large underdetermined systems of equations, the minimum
  $\ell_{1}$-norm near-solution approximates the sparsest near-solution.
\newblock {\em Comm.~Pure Appl.~Math.}, 59(7):907--934, 2006.

\bibitem{DonTan05}
D.L. Donoho and J.~Tanner.
\newblock Sparse nonnegative solution of underdetermined linear equations by
  linear programming.
\newblock {\em Proc.~National Academy of Sciences}, 102(27):9446--9451, 2005.

\bibitem{DonohoT10}
D.L. Donoho and J.~Tanner.
\newblock Counting the faces of randomly-projected hypercubes and orthants,
  with applications.
\newblock {\em Discrete {\&} Computational Geometry}, 43(3):522--541, 2010.

\bibitem{HassibiIEEE}
M.A. Khajehnejad, A.G. Dimakis, W.~Xu, and B.~Hassibi.
\newblock Sparse recovery of positive signals with minimal expansion.
\newblock {\em IEEE Trans Signal Processing}, 59:196--208, 2011.

\bibitem{Man09ProbInteger}
O.L. Mangasarian and B.~Recht.
\newblock Probability of unique integer solution to a system of linear
  equations.
\newblock {\em European Journal of Operational Research}, 214(1):27--30, 2011.

\bibitem{Petra2009}
S.~Petra and C.~Schn\"{o}rr.
\newblock Tomo{PIV} meets compressed sensing.
\newblock {\em Pure Math.~Appl.}, 20(1-2):49--76, 2009.

\bibitem{Stojnic10a}
M.~Stojnic.
\newblock $\ell_1$ optimization and its various thresholds in compressed
  sensing.
\newblock In {\em ICASSP}, pages 3910--3913, 2010.

\bibitem{Stojnic10b}
M.~Stojnic.
\newblock Recovery thresholds for $\ell_1$ optimization in binary compressed
  sensing.
\newblock In {\em ISIT}, pages 1593--1597, 2010.

\bibitem{Vapnik1971}
V.N. Vapnik and A.Y. Chervonenkis.
\newblock {O}n the uniform convergence of relative frequencies of events to
  their probabilities.
\newblock {\em Theory Probab.~Appl.}, 16:264--280, 1971.

\bibitem{WangIEEE}
M.~Wang, W.~Xu, and A.~Tang.
\newblock A unique "nonnegative" solution to an underdetermined system: From
  vectors to matrices.
\newblock {\em IEEE Transactions on Signal Processing}, 59(3):1007--1016, 2011.

\bibitem{Wendel-62}
J.G. Wendel.
\newblock {A} {P}roblem in {G}eometric {P}robability.
\newblock {\em Math.~Scand.}, 11:109--111, 1962.

\bibitem{XuHassibi_Expander}
W.~Xu and B.~Hassibi.
\newblock {Efficient Compressive Sensing with Deterministic Guarantees Using
  Expander Graphs}.
\newblock In {\em Information Theory Workshop, 2007. ITW '07. IEEE}, pages
  414--419, 2007.

\end{thebibliography}

\end{document}